\documentclass[12pt, dvipsnames, oneside]{amsart}
\usepackage[margin=1.5in]{geometry}                
\geometry{a4paper}                   
\usepackage{graphicx}
\usepackage{multicol}
\usepackage{amssymb}
\usepackage[pdfencoding=auto]{hyperref}
\usepackage{cleveref}
\usepackage[backend=biber]{biblatex}
\usepackage{epstopdf}
\usepackage{tikz-cd}
\usepackage{linegoal}
\usepackage{changepage}
\usepackage{xfrac}
\usetikzlibrary{decorations.pathmorphing}
\usepackage{rotating}
\usepackage{biblatex}

\DeclareGraphicsRule{.tif}{png}{.png}{`convert #1 `dirname #1`/`basename #1 .tif`.png}

\DeclareMathOperator{\Ker}{Ker}
\DeclareMathOperator{\Img}{Im}

\DeclareMathOperator{\Rad}{Rad}

\newtheorem{theorem}{Theorem}[section]

\newtheorem{lemma}[theorem]{Lemma}
\newtheorem{observation}[theorem]{Observation}
\newtheorem{remark}[theorem]{Remark}

\theoremstyle{definition}
\newtheorem{definition}{Definition}[section]
\newtheorem{example}{Example}[section]

\newcommand{\breakcell}[1]{\parbox[t]{\linegoal}{#1}}
\newcommand{\Hom}{\text{Hom}}
\newcommand{\End}{\text{End}}
\newcommand{\bsfrac}[2]{\reflectbox{\sfrac{\text{\reflectbox{$#1$}}}{\text{\reflectbox{$#2$}}}}}

\addbibresource{A_combinatorial_procedure_for_tilting_mutation.bib}

\title{A combinatorial procedure for tilting mutation}
\author{Didrik Fosse}
\newcommand{\Addresses}{{
  \bigskip
  \footnotesize

  Didrik Fosse, \textsc{Department of Mathematical Sciences, NTNU,
    7491 Trondheim, Norway}\par\nopagebreak
}}

\begin{document}

\begin{abstract}
Tilting mutation is a way of producing new tilting complexes from old ones replacing only one indecomposable summand. In this paper, we give a purely combinatorial procedure for performing tilting mutation of suitable algebras.

As an application, we recreate a result due to Ladkani, which states that the path algebra of a quiver shaped like a line (with certain relations) is derived equivalent to the path algebra of a quiver shaped like a rectangle. We will do this by producing an explicit series of tilting mutations going between the two algebras.
\end{abstract}

\maketitle

\section{Introduction}
Tilting theory is central to the study of derived equivalences, and among the most important results in the field is Rickard's Morita theorem for derived categories.\cite{Rickard1989} 

The theorem states that two $k$-algebras are derived equivalent if and only if there exists a certain tilting complex over one of them, which means that for a given $k$-algebra, every tilting complex gives a $k$-algebra which is derived equivalent to it. Thus if we can find a way to easily generate tilting complexes, we get a simple way to generate derived equivalent $k$-algebras. This is where tilting mutation comes in.

Tilting mutation was first developed in \cite{RS1991} as a way of obtaining new tilting complexes by modifying known tilting complexes. This idea was generalised in \cite{BMRRT2004} to define cluster tilting mutation, and in \cite{AI2012} to define silting mutation. 
A purely combinatorial approach to perform silting mutation was developed in \cite{Oppermann2015}. 

\medskip

The goal of this paper is to develop a similar combinatorial approach for tilting mutation, which relies only on modifying the quiver with relations of the algebra we are mutating. Furthermore, we will through an example show how this approach can be used to easily generate a chain of derived equivalent algebras, which in turn could be used as a way to examine whether two given algebras are derived equivalent. This example will be inspired by the work of Ladkani in \cite{Ladkani2013}.

Note that in this paper we only consider right tilting mutation. An entirely dual result can be found by instead using left tilting mutation.
In that case, the combinatorial procedure will be exactly the same, except that all arrows will be flipped around.

\smallskip

\section{Notation}\label{section:setup}

Throughout this paper, $(Q,I)$ will be a quiver with admissible relations, $k$ will be a field, and $\Lambda=kQ/I$ is the corresponding path algebra. For ease of use, we will refer to $(Q,I)$ simply as $Q$. Let $P_i$ denote the indecomposable projective $\Lambda$-module of all paths starting in vertex $i$. 

If there is a path $\beta$ in $Q$ from vertex $i$ to vertex $j$, then there is a morphism $P_j\xrightarrow{\beta} P_i$ given by precomposing with the path $i\xrightarrow{\beta} j$. 
Note that we use the same notation for a path between two vertices and the corresponding morphism between projective modules. This is to keep the notation simple, and it will be clear from context when we are referring to one or the other. Composition of paths in $Q$ will be written from right to left, while composition of morphisms between $\Lambda$-modules will be written from left to right.

For an arrow $\alpha$, we denote by $s(\alpha)$ and $t(\alpha)$ the vertices where the arrow begins and ends, respectively (similarly for paths and relations). 

If $\alpha\colon i\rightarrow j$ is an arrow, and $r\colon i \rightarrow k$ is a linear combination of paths in $Q$ (e.g. a relation), then $\sfrac{r}{\alpha}$ will denote the result of taking the linear combination of all paths in $r$ that begin with the arrow$ \alpha$, and then removing $\alpha$ from each of them. So for each arrow $\alpha$ starting in $i$, there is a linear combination of paths $\sfrac{r}{\alpha}\colon t(\alpha)\rightarrow j$, and together they satisfy $\sum\limits_{\alpha\colon i\rightarrow ?} \sfrac{r}{\alpha}\cdot\alpha = r$ (note that $\sfrac{r}{\alpha}=0$ if $r$ contains no path beginning with $\alpha$). Similarly, $\bsfrac{r}{\beta}\colon h\rightarrow s(\beta)$ refers to the linear combination of paths obtained by removing the arrow $\beta\colon s(\beta)\rightarrow t(r)$ from the end of $r$.

\smallskip

When we write something like $[\beta\alpha]_{\alpha\in A}$ this refers to a vector where each term consists of the expression in brackets, iterating over whatever is in the subscript (in this case it means the vector consisting of each arrow $\alpha\in A$, all followed by the same arrow $\beta$). 

We usually only write $[\beta\alpha]_{\alpha}$, omitting the set we are iterating over, as it is clear from context which set that is.

When we use double subscript we mean the matrix consisting of the expression the first variable along the rows and the second variable down the columns. So for example $[\beta\alpha]_{\alpha,\beta}$ will be a matrix where the $i$-th column consists of the arrow $\alpha_i$ composed with each of the arrows $\beta_j$. 

For the vectors it will be clear from context whether we view them as row or column vectors.

\section{Statement of main theorem}\label{section:mutationsteps}
In this section we state our main result, and give an example to illustrate how it can be used in practice.

The main result of this paper is a purely combinatorial procedure for quiver mutation, which gives us an easy way to perform tilting mutation of a path algebra. Given a quiver $Q$ with relations and a specified vertex $i$ in that quiver, we will construct a new quiver with relations whose path algebra is isomorphic to the algebra obtained by performing tilting mutation of the path algebra of $Q$ at vertex $i$. 

Tilting mutation of $\Lambda$ at $i$ is performed by replacing the indecomposable projective direct summand $P_i$ with another $\Lambda$-module, which we will denote by $P_i^*$. Specifically, $P_i^*$ the cocone of a right approximation of $P_i$ by the other $P_j$ (see \cref{obs:mutationTriangle} for more details).
Inspired by this, the quiver mutation we define here is performed by taking the quiver $Q$ and replacing one of its vertices $i$ with a new vertex $i^*$, and changing the arrows and relations accordingly.

\medskip

Unfortunately, tilting mutation is not always possible, in the sense that mutating an arbitrary tilting complex with respect to an arbitrary indecomposable projective module does not always yield a new tilting complex.

However, under the following assumptions on the path algebra, mutation of $\Lambda$ with respect to $P_i$ is always possible. 

\begin{itemize}
\item Any non-iso endomorphism of $P_i$ factors through another indecomposable projective module. This is equivalent to the quiver having no cycles of length one on vertex $i$ (i.e. no arrows $i\rightarrow i$).
\item We have that $\Hom_{\Lambda}(P_i^*[1], \Lambda) = 0$. This is a sufficient condition for the tilting mutation to work (that is, the result of the mutation will still be a tilting complex).
\end{itemize}

As we will see later, in practice we can use the second point as an easy check to identify some cases where mutation is not possible. For each nonzero path ending in vertex $i$, there must be at least one arrow out of $i$ such that the composition of the path with that arrow is nonzero. If not, then mutation is not possible at vertex $i$. 

In particular, if there is only one arrow $\alpha$ starting in $i$, then mutation is not possible in $i$ if there is a minimal zero relation whose last arrow is $\alpha$. 

\begin{theorem}\label{theorem:mainResult}
Let $\Lambda=kQ/I$ be the path algebra of a quiver $Q$ with relations $I$, and let $i$ be a vertex of $Q$ such that there are no arrows $i\rightarrow i$. Let $\mu_i^R(\Lambda)\simeq\Lambda/P_i\oplus P_i^*$ denote the right tilting mutation of $\Lambda$ at the indecomposable projective $\Lambda$-module $P_i$. We assume that $\Hom_\Lambda(P_i^*[1], \Lambda)=0$ for all $i\neq 0$.
Then, mutating the quiver $Q$ at vertex $i$ according to the mutation procedure stated below yields a quiver, $m_i(Q)$, whose path algebra is isomorphic to $\End_\Lambda(\mu_i^R(\Lambda))^{\text{op}}$.
\end{theorem}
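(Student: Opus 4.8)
The plan is to prove \cref{theorem:mainResult} directly, by writing down a surjective homomorphism of $k$-algebras $k\,m_i(Q)\twoheadrightarrow\End_\Lambda(\mu_i^R(\Lambda))^{\mathrm{op}}$ and then checking that its kernel is exactly the relation ideal produced by the mutation procedure. No appeal to Rickard's theorem is needed here: the statement is purely about identifying an endomorphism algebra. Throughout write $T=\mu_i^R(\Lambda)=\bigoplus_{j\neq i}P_j\oplus P_i^*$ and $A=\End_\Lambda(T)^{\mathrm{op}}$, a basic algebra with primitive idempotents $e_j$ ($j\neq i$) and $e_{i^*}$, and $e_aAe_b=\Hom_\Lambda(T_b,T_a)$.

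\textbf{Step 1 (the mutation triangle).} First I would record the triangle of \cref{obs:mutationTriangle},
\[
P_i^*\longrightarrow E\xrightarrow{\,g\,}P_i\longrightarrow P_i^*[1],\qquad E=\bigoplus_{\alpha\colon i\to t(\alpha)}P_{t(\alpha)},
\]
where $g=[\alpha]_\alpha$ is the minimal right $\operatorname{add}(\Lambda/P_i)$-approximation of $P_i$. The hypothesis that there are no arrows $i\to i$ is exactly what guarantees that $E$ is a direct sum of projectives $P_j$ with $j\neq i$ and that $g$ is given by the arrows out of $i$, so that $P_i^*$ is realised as the two-term complex $[E\xrightarrow{g}P_i]$.

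\textbf{Step 2 (the blocks of $A$).} Since the $P_j$ are projective, applying $\Hom_\Lambda(P_l,-)$, $\Hom_\Lambda(-,P_j)$ and $\Hom_\Lambda(-,P_i^*)$ to the triangle yields long exact sequences that compute every block of $A$, and the hypothesis $\Hom_\Lambda(P_i^*[1],\Lambda)=0$ (with its consequence that $\Hom_\Lambda(P_i^*[1],P_i^*)=0$) is precisely what forces the relevant connecting maps to vanish. One gets $e_jAe_l=\Hom_\Lambda(P_l,P_j)$ unchanged, i.e. equal to the corresponding block of $\Lambda$, for $j,l\neq i$; the block $e_{i^*}Ae_l$ is the space of tuples $(\xi_\alpha)_\alpha$ of linear combinations of paths $\xi_\alpha\colon t(\alpha)\to l$ subject to $\sum_\alpha\xi_\alpha\alpha\in I$; the block $e_jAe_{i^*}$ is the space of tuples of linear combinations of paths $j\to t(\alpha)$, modulo those of the form $[\alpha\beta]_\alpha$ with $\beta\colon j\to i$; and $e_{i^*}Ae_{i^*}=\End_{K^b(\Lambda)}(P_i^*)$, computed the same way.

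\textbf{Step 3 (arrows and the algebra map).} From the block description I would extract a generating set of $\Rad A/\Rad^2A$ and match it with the arrows of $m_i(Q)$: every arrow of $Q$ not incident to $i$ reappears unchanged; each arrow $\alpha\colon i\to t(\alpha)$ contributes a reversed arrow $\alpha^{*}\colon t(\alpha)\to i^{*}$ coming from the canonical single-entry tuple of $e_{t(\alpha)}Ae_{i^{*}}$; each nonzero composite $\alpha\beta$ with $\beta\colon h\to i$, $\alpha\colon i\to t(\alpha)$ contributes a ``bypass'' arrow $h\to t(\alpha)$; and each generator of the relations of $I$ with source $i$ contributes, via the decomposition $r=\sum_\alpha(\sfrac{r}{\alpha})\alpha$, an arrow out of $i^{*}$. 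That these classes span $\Rad A/\Rad^2A$ follows from Step 2 together with the fact that every path of $Q$ factors into arrows; this defines the surjection $k\,m_i(Q)\twoheadrightarrow A$.

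\textbf{Step 4 (relations), the main obstacle.} Finally I would compute the kernel and identify it with the relation ideal of $m_i(Q)$. The easy inclusion is to verify that each stated relation holds in $A$: relations of $I$ avoiding $i$ are inherited verbatim; a relation $r$ of $I$ with target $i$ is rewritten using $\bsfrac{r}{\beta}$ into relations among the bypass arrows and the $\alpha^{*}$; a relation with source $i$ is rewritten using $\sfrac{r}{\alpha}$; and the triangle itself imposes the ``mutation relations'' expressing that the composites $i^{*}\to t(\alpha)\to\cdots$ recombine, through $g$, into the approximation. The hard inclusion — that there are no further relations — is where I expect the real work to be: it requires a normal form for paths of $m_i(Q)$ modulo the stated ideal (push each occurrence of $i^{*}$ to one end, replace every detour through the old vertex $i$ by a bypass arrow) and then a block-by-block dimension count of the normal forms against the Hom-spaces computed in Step 2. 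The delicate point throughout is the bookkeeping of how relations through $i$ are split by the $\sfrac{r}{\alpha}$ and $\bsfrac{r}{\beta}$ operations and the verification that minimality of the relation system is preserved.
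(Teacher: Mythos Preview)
Your outline is broadly correct and rests on the same engine as the paper --- the mutation triangle and long exact Hom-sequences --- but the organisation differs in two substantive ways.

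First, the paper does \emph{not} go directly from $Q$ to $m_i(Q)$. It factors the computation through the intermediate algebra $\End_\Lambda(\Lambda\oplus P_i^*)$, i.e.\ it first \emph{adds} the vertex $i^*$ to $Q$ (obtaining a quiver $Q^{i^*}$) and only afterwards \emph{removes} $i$ via a separate, general lemma (\cref{lemma:QuiverRemoveVertex}). The advantage is that while $P_i$ is still present, the entire original quiver $Q$ sits unchanged inside $Q^{i^*}$: there are no bypass arrows $\alpha\beta$, no extended relations, and the only new data concern the single vertex $i^*$. Your direct approach forces you to handle the bypass arrows and the splitting of relations through $i$ simultaneously with the new $i^*$-data, which is exactly the bookkeeping you flag as delicate in Step~4.

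Second, for the block $e_{i^*}Ae_l=\Hom_\Lambda(\Lambda,P_i^*)$ --- the source of the arrows $\overline{r}$ and of the hardest relations --- the paper avoids any normal-form or dimension-count argument. Instead it compares the exact sequence coming from the mutation triangle with the one obtained by applying $\Hom_\Lambda(\Lambda,-)$ to the projective resolution $\bigoplus_r P_{t(r)}\to\bigoplus_\alpha P_{t(\alpha)}\to P_i\to S_i$ of the simple at $i$ (\cref{lemma:projres}); the four lemma then gives both the surjectivity of $[\overline{r}]_r$ and the identification $\Ker[\overline{r}]_r=\Ker[\sfrac{r}{\alpha}]_{r,\alpha}$ in one stroke (\cref{lemma:homLambdaPi}). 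This cleanly replaces the ``hard inclusion'' you anticipate: rather than proving no further relations exist by exhausting paths, the paper reads off the full relation space from an already-known kernel. Your route would work, but the paper's two devices --- the intermediate quiver $Q^{i^*}$ and the comparison with the resolution of $S_i$ --- are precisely what make the relation computation short.
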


The following list of steps shows exactly which arrows and relations are to be added and removed during the quiver mutation. Note that, rather than simply removing vertex $i$ and adding vertex $i^*$, it can be helpful to visualise the procedure as $i^*$ actually replacing $i$. That way we can talk about ``flipping an arrow'', and ``changing a relation into an arrow'', rather than having to say we remove an arrow/relation to or from vertex $i$ and add an arrow/relation to or from $i^*$.

\emph{The procedure}
\begin{itemize}
\item \textbf{Step 1: Add arrows for compositions through $\boldsymbol{i}$}\\
When we remove vertex $i$ we lose all arrows $\beta\colon h\rightarrow i$ and $\alpha\colon i\rightarrow j$, but the compositions $\alpha\beta\colon h\rightarrow j$ are still paths (which are now minimal). Thus we add them as arrows.\\

\item \textbf{Step 2: Flip arrows out of $\boldsymbol{i}$}\\
Any arrow $\alpha\colon i\rightarrow j$ is replaced by an arrow $\alpha^*\colon j\rightarrow i^*$.\\

\item \textbf{Step 3: Relations out of $\boldsymbol{i}$ become arrows}\\
A minimal relation $r\colon i\dashrightarrow k$ is replaced by an arrow $\overline{r}\colon i^*\rightarrow k$. If the quiver contains a cycle on $i$, and there is a minimal relation $r\colon i\dashrightarrow i$, we get one arrow $\alpha\overline{r}\colon i^*\rightarrow t(\alpha)$ for each arrow $\alpha\colon i \rightarrow t(\alpha)$.\\

\item \textbf{Step 4: Arrows into $\mathbf{i}$ become relations}\\
For each arrow $\beta\colon h\rightarrow i$ we get a new relation $h\dashrightarrow i^*$ given by ${\sum\limits_{\alpha\colon i\rightarrow ?} \alpha^*\alpha\beta=0}$, where $\alpha$ runs over all arrows in $Q$ starting in $i$. \\

\item \textbf{Step 5: Add relations for new compositions}\\
We add a relation for every composition through $i^*$.
Any compositon of arrows through vertex $i^*$ consists of arrows $\alpha^*$ and $\overline{r}$ as defined in step 2 and 3. Each such composition gives rise to a relation given by $\overline{r}\alpha^*=\sfrac{r}{\alpha}$, where $\sfrac{r}{\alpha}\colon t(\alpha)\rightarrow k$ is the (linear combination of) path(s) in the relation $r\colon i\dashrightarrow k$ whose first arrow is $\alpha$, with $\alpha$ removed ($\sfrac{r}{\alpha}=0$ if no such path exists in $r$).\\

\item \textbf{Step 6: Extend relations into $\boldsymbol{i}$}\\
Any relation in $Q$ which ends in $i$ will give relations ending in $t(\alpha)$ for each arrow $\alpha$ starting in $i$. A relation ending in vertex $i$ can be written as $r=\sum\limits_{\beta: ?\rightarrow i} \beta\bsfrac{r}{\beta}$, where $\beta$ runs over all arrows in $Q$ ending in $i$. Thus postcomposing with an arrow $\alpha\colon i \rightarrow j$ gives a relation $\sum\limits_{\beta: ?\rightarrow i} \alpha\beta\bsfrac{r}{\beta}$ ending in $j$.\\

\item \textbf{Step 7: Add relations out of $\boldsymbol{i^*}$}\\
A linear combination of paths from $i^*$ to some vertex $l$ defines a relation in $m_i(Q)$ if and only if precomposing it with each arrow ${\alpha^*\colon t(\alpha)\rightarrow i^*}$ gives a relation in the quiver $Q$. Specifically, if $R$ is a subset of the set of relations in $Q$ which begin in $i$, and $\{\varepsilon_r\}_{r\in R}$ is a collection of (linear combinations of) paths $t(r)\rightarrow l$, then $\sum\limits_{r\in R} \varepsilon_r \overline{r}=0$ is a relation $i^*\dashrightarrow l$ in $m_i(Q)$ if and only if $\sum\limits_{r\in R} \varepsilon_r \sfrac{r}{\alpha}=0$ is a relation in $Q$ for each $\alpha$ starting in $i$. 

\end{itemize}

\medskip

\textbf{Note:} It can happen that we after mutation obtain a relation which is not admissible, i.e. containing a path of length one. Such a relation can be interpreted as setting the corresponding arrow in the quiver equal to some other linear combination of paths (or zero), and thus, removing both the arrow and the relation from the quiver will give an equivalent path algebra. 
In other words, whenever we get a relation which contains a path of length one, we can ``cancel'' the relation against the corresponding arrow, and remove both from the quiver.

\newgeometry{margin=1.1in}

\vspace{2em}

The following table is meant to help visualise how each step of the mutation procedure works in practice. 
It shows the relevant parts of a quiver before and after each step has been applied.
Be aware that these by no means cover all possible cases for how each step behaves. 
The arrows/relations that are directly affected by a given step are marked by green before mutation (only where applicable), and red after mutation.

\vspace{5em}

\begin{tabular}{|c|c|c|l|}
\hline

Step & Before mutation & After mutation & Relations \\ \hline

1 &
\begin{tikzcd}
h \arrow[dr, "\beta"'] & & j\\
& i \arrow[ur, "\alpha"'] &
\end{tikzcd}	
& 
\begin{tikzcd}
h \arrow[r, color=ForestGreen, "{\color{black}\alpha\beta}"] & j
\end{tikzcd} 
& \\ \hline

2&
\begin{tikzcd}
i \arrow[r, color=red, "{\color{black}\alpha}"] & j
\end{tikzcd}	
&
\begin{tikzcd}
i^* & \arrow[l, color=ForestGreen, "{\color{black}\alpha^*}"'] j
\end{tikzcd} 
& \\ \hline

3&
\begin{tikzcd}
i \arrow[dr, "\alpha"'] \arrow[rr, dashed, color=red, "{\color{black}r}"] & & k \\
& j \arrow[ur, "\gamma"'] &
\end{tikzcd}	
&
\begin{tikzcd}
i^* \arrow[rr, color=ForestGreen, "{\color{black}\overline{r}}"] & & k \\
& j \arrow[ul, "{\alpha^*}"] \arrow[ur, "\gamma"'] &
\end{tikzcd}
& \breakcell{$r=\gamma\alpha=0$ \\}  \\ \hline

4&
\begin{tikzcd}
h \arrow[dr, color=red, "{\color{black}\beta}"'] & & j \\
& i \arrow[ur, "\alpha"'] &
\end{tikzcd}	
&
\begin{tikzcd}
h \arrow[rr, "{\alpha\beta}"] \arrow[dr, dashed, color=ForestGreen] & & j \arrow[dl, "{\alpha^*}"] \\
& i^* &
\end{tikzcd} 
& \breakcell{$\alpha^*\alpha\beta=0$ \\} \\ \hline

5&
\begin{tikzcd}
& j_1 & \\
i \arrow[ur, "\alpha_1"] \arrow[r, "\alpha_2"] \arrow[rr, bend right, dashed, "r"'] & j_2 \arrow[r, "\gamma"] & k
\end{tikzcd}	
& 
\begin{tikzcd}
j_1 \arrow[dr, "\alpha_1^*"'] \arrow[drr, dashed, color=ForestGreen] & & \\
& i^* \arrow[r, "\overline{r}" near start] & k\\
j_2 \arrow[ur, "\alpha_2^*"] \arrow[urr, bend right, "\gamma"'] \arrow[urr, dashed, color=red] & &
\end{tikzcd} 
& \breakcell{$r=\gamma\alpha_2=0$ \\ \\ $\overline{r}\alpha_1^*=0$ \\ $\overline{r}\alpha_2^*=\gamma$ \\}\\ \hline

6&
\begin{tikzcd}
& h \arrow[r, "\beta"] &  i \arrow[dr, "\alpha"] &\\
g \arrow[ur, "\delta"] \arrow[urr, dashed] & & & j 
\end{tikzcd}	
& 
\begin{tikzcd}
& h \arrow[r, dashed] \arrow[drr, "\alpha\beta"'] &  i^* &\\
g \arrow[ur, "\delta"] \arrow[rrr, dashed, color=ForestGreen] & & & j \arrow[ul, "\alpha^*"'] 
\end{tikzcd} 
& \breakcell{$\alpha^*\alpha\beta=0$ \\ $\alpha\beta\delta=0$ \\} \\ \hline

7&
\begin{tikzcd}
& j \arrow[r, "\gamma"] \arrow[drr, dashed] &  k  \arrow[dr, "\varepsilon"]&\\
i \arrow[ur, "\alpha"] \arrow[urr, dashed, "r"'] & & & l 
\end{tikzcd} 
&
\begin{tikzcd}
& j \arrow[dl, "\alpha^*"'] &  k \arrow[dr, "\varepsilon"] &\\
i^* \arrow[urr, "\overline{r}"'] \arrow[rrr, dashed, color=ForestGreen] & & & l
\end{tikzcd} 
& \breakcell{$\varepsilon\overline{r}=0$}\\ \hline

\end{tabular}

\restoregeometry

\bigskip

Here are a couple of quick examples to show the steps in action, a more comprehensive example is given in \cref{section:example}. 
\begin{example}
Let $Q$ be the following quiver and let $\Lambda=kQ/I$, where $I$ is the set of relations as indicated by the dashed arrows. \begin{equation*}
\begin{tikzcd}
 & & & 4 \arrow[dr, "\delta"] \arrow[drr, dashed, bend left] & & \\
 Q = 1 \arrow[r, "\alpha"] \arrow[rr, bend left, dashed] & 2 \arrow[r, "\beta"] & 3 \arrow[ur, "\gamma"] \arrow[dr, "\varepsilon"] \arrow[rr, dashed] & & 6 \arrow[r, "\eta"] & 7 \\
 & & & 5 \arrow[ur, "\zeta"] \arrow[urr, dashed, bend right] & & 
\end{tikzcd}
\end{equation*}
Explicitly, the set of relations is given by
$$\beta\alpha=0,\quad \delta\gamma+\zeta\varepsilon = 0,\quad \eta\delta = 0,\quad \zeta\eta = 0.$$
Suppose we want to use the mutation procedure to determine the resulting quiver from performing right tilting mutation at vertex $3$ in $Q$. Then we simply replace vertex $3$ by $3^*$, and then apply the steps in order, starting with step $1$.
\begin{enumerate}
\item We add arrows $\gamma\beta\colon 2\rightarrow 4$ and $\varepsilon\beta\colon 2\rightarrow 5$, corresponding to the compositions through $3$.
\item We flip the arrows $\gamma\colon 3\rightarrow 4$ and $\varepsilon\colon 3\rightarrow 5$, to get $\gamma^*$ and $\varepsilon^*$.
\item We change the relation $\delta\gamma+\zeta\varepsilon\colon3\dashrightarrow 6$ into an arrow $\overline{\delta\gamma+\zeta\varepsilon}$.
\item We change the arrow $\beta\colon 2\rightarrow 3$ into a relation, defined by $\gamma^*\gamma\beta+\varepsilon^*\varepsilon\beta=0$.
\item We add relations $4\dashrightarrow 6$ and $5\dashrightarrow 6$, corresponding to to the new compositions through $3^*$. Explicitly, they are defined as $\delta+\Big[\overline{\delta\gamma+\zeta\varepsilon}\Big]\gamma^* = 0$ and $\zeta + \Big[\overline{\delta\gamma+\zeta\varepsilon}\Big]\varepsilon^*=0$.
\item We remove the relation $\beta\alpha\colon1\dashrightarrow 3$, and in its place add relations $\gamma\beta\alpha\colon 1\dashrightarrow 4$ and $\epsilon\beta\alpha\colon 1\dashrightarrow 5$.
\item Since there are relations $4\dashrightarrow 7$ and $5\dashrightarrow 7$ we add a relation $3^*\dashrightarrow 7$, given by $\eta\Big[\overline{\delta\gamma+\zeta\varepsilon}\Big]=0$
\end{enumerate} 

Thus we obtain the following quiver

\begin{equation*}
\begin{tikzcd}
 & & & 4 \arrow[dr, "\delta"] \arrow[dl, "{\gamma^*}"] \arrow[dr, bend right, dashed] \arrow[drr, dashed, bend left] & & \\
 1 \arrow[r, "\alpha"] \arrow[urrr, bend left, dashed] \arrow[drrr, bend right, dashed] & 2 \arrow[r, dashed] \arrow[urr, "{\gamma\beta}"] \arrow[drr, "{\varepsilon\beta}"'] & 3^* \arrow[rr, "{\overline{\delta\gamma+\zeta\varepsilon}}"] \arrow[rrr, bend right, looseness=0.5, dashed] & & 6 \arrow[r, "\eta"] & 7 \\
 & & & 5 \arrow[ur, "{\zeta}"'] \arrow[ul, near start, "{\varepsilon^*}"'] \arrow[ur, bend left, dashed] \arrow[urr, dashed, bend right] & & .
\end{tikzcd}
\end{equation*}

The relation $4\dashrightarrow 6$ gives that the arrow $\delta$ is equal (up to sign) to the path $\overline{\delta\gamma+\zeta\varepsilon}$, hence we can remove both the arrow and the relation. The same is true for the arrow $\zeta$ and the relation $5\dashrightarrow 6$. We also remove the relations $4\dashrightarrow 7$ and $5\dashrightarrow 7$, since they both now factor through the relation $3^*\dashrightarrow 7$, and hence are superfluous. Cleaning up a bit, this leaves us with the following quiver as the result of the mutation of $Q$ at vertex $3$.

\begin{equation*}
\begin{tikzcd}
 & & 4 \arrow[dr, "{\gamma^*}"] & & & \\
 1 \arrow[r, "\alpha"] \arrow[urr, bend left, dashed] \arrow[drr, bend right, dashed] & 2 \arrow[ur, "{\gamma\beta}"] \arrow[dr, "{\varepsilon\beta}"] \arrow[rr, dashed] & & 3^* \arrow[r, "{\overline{\delta\gamma+\zeta\varepsilon}}"'] \arrow[rr, bend left, dashed] & 6 \arrow[r, "\eta"'] & 7 \\
 & & 5 \arrow[ur, "{\varepsilon^*}"] & & & 
\end{tikzcd}
\end{equation*}

\end{example}

The following example is meant to show that the mutation procedure can be applied to quivers which contain cycles, although the process becomes slightly more complicated. 

\begin{example}

Consider the following quiver, with the relation $\beta\alpha = 0$, and suppose we want to mutate it at vertex $1$.
\begin{equation*}
\begin{tikzcd}
1 \arrow[rr, shift left, "\alpha"] \arrow[out=30,in=330,loop,looseness=32, dashed, near start, "{\beta\alpha}"] & & 2 \arrow[ll, shift left, "\beta"] &
\end{tikzcd}
\end{equation*}
We replace vertex $1$ by $1^*$, and then we apply the steps of the mutation procedure in order. Especially take note of step 3, since there is a cycle on vertex $1$.

\begin{enumerate}
\item We add an arrow $\alpha\beta\colon 2\rightarrow 2$, corresponding to the compostition through vertex $1$.
\item We flip the arrow $\alpha$, resulting in the arrow $\alpha^*\colon 2\rightarrow 1^*$.
\item Because the relation $\beta\alpha$ both begins and ends in $1$, we add an arrow $\alpha\overline{\beta\alpha}\colon 1^*\dashrightarrow 2$, corresponding to the composition of the relation $\beta\alpha$ with the arrow $\alpha$.
\item We change the arrow $2\rightarrow 1$ into a relation $2\dashrightarrow 1^*$, given by $\alpha^*\alpha\beta=0$.
\item We add a relation $2\dashrightarrow 2$, corresponding to the new composition through $1^*$. This composition is given by $\alpha\overline{\beta\alpha}\alpha^*=\alpha\beta$
\item We remove the relation $\beta\alpha\colon1\dashrightarrow 1$, and in its place add a relation $1^*\dashrightarrow 2$ given by $\alpha\beta\alpha\overline{\beta\alpha}=0$.
\item Since there is only one minimal relation in the unmutated quiver, step $7$ doesn't come into play.
\end{enumerate}

Thus we obtain the following quiver

\begin{equation*}
\begin{tikzcd}
1^* \arrow[rr, shift left, "\alpha\overline{\beta\alpha}"] \arrow[rr, out=40,in=320,loop,looseness=5, dashed, very near start, "{\alpha\beta\alpha\overline{\beta\alpha}}"]  & & 2 \arrow[ll, shift left, "\alpha^*"] \arrow[out=40,in=320,loop, "\alpha\beta"] \arrow[ll, out=40,in=320,loop,looseness=5, dashed, very near end, "{\alpha^*\alpha\beta}"] \arrow[out=45,in=135,loop,looseness=5, dashed, "\circlearrowleft"] &
\end{tikzcd}
\end{equation*}

The relation $\alpha\beta = \alpha\overline{\beta\alpha}\alpha^*$ shows that the arrow $\alpha\beta$ factors through the composition $\alpha\overline{\beta\alpha}\alpha^*$, which means that $\alpha\beta$ is a superfluous arrow in the quiver. This also shows that the relation $\alpha^*\alpha\beta = 0$ can be rewritten as $\alpha^*\alpha\beta = \alpha^* (\alpha\overline{\beta\alpha}) = 0$, and the relation $\alpha\beta\alpha\overline{\beta\alpha}$ can be rewritten as $\alpha\overline{\beta\alpha}\alpha^*\alpha\overline{\beta\alpha} = 0$, which is redundant because of the previous relation. Thus we end up with the following quiver, where the relation is given by the composition $1^*\rightarrow 2\rightarrow 1^*$ being zero (this quiver with relation is actually isomorphic to the one we started with).

\begin{equation*}
\begin{tikzcd}
1^* \arrow[rr, shift left, "\alpha\overline{\beta\alpha}"] \arrow[out=30,in=330,loop,looseness=25, dashed] & & 2 \arrow[ll, shift left, "\alpha^*"]
\end{tikzcd}
\end{equation*}

\end{example}

This was meant as two simple examples to see how the mutation procedure can be used in practice. 

\section{Tilting mutation}
In this section we will define the right tilting mutation of an algebra, and present some results which we will need in order to prove the main theorem.

\begin{lemma}\label{lemma:minrightapprox}
Let $P_i$ be an indecomposable projective direct summand of $\Lambda=\bigoplus\limits_{j\in Q_0} P_j$, such that $i$ has no cycle of length $1$. Then $\bigoplus\limits_{\alpha:i \rightarrow ?} P_{t(\alpha)} \rightarrow P_i$ is a minimal right $\Lambda/P_i$-approximation of $P_i$ (the sum runs over all arrows starting in vertex $i$).
\end{lemma}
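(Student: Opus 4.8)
The plan is to verify two things: that the map $\pi\colon\bigoplus_{\alpha\colon i\to ?}P_{t(\alpha)}\to P_i$ assembled from the arrows out of $i$ is a right $\Lambda/P_i$-approximation, and that it is right minimal. First I would make the map explicit: each arrow $\alpha\colon i\to t(\alpha)$ gives a morphism $P_{t(\alpha)}\xrightarrow{\alpha}P_i$ by precomposition (in the notation set up in \cref{section:setup}), and $\pi$ is the sum of these. For the approximation property I must show that every morphism $f\colon P_j\to P_i$ with $j\neq i$ factors through $\pi$. Such an $f$ is (right multiplication by, equivalently precomposition with) an element of $e_i\Lambda e_j$, i.e. a linear combination of paths from $i$ to $j$; since $j\neq i$ every such path has length $\geq 1$, so it begins with some arrow $\alpha\colon i\to t(\alpha)$, and grouping the paths by their initial arrow writes $f=\sum_\alpha \alpha\cdot(\sfrac{f}{\alpha})$ where $\sfrac{f}{\alpha}\colon P_j\to P_{t(\alpha)}$. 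This exhibits $f$ as the composite $P_j\xrightarrow{(\sfrac{f}{\alpha})_\alpha}\bigoplus_\alpha P_{t(\alpha)}\xrightarrow{\pi}P_i$. Since $\Lambda/P_i=\bigoplus_{j\neq i}P_j$ and $\pi$ factors morphisms from each indecomposable summand, it is a right $\Lambda/P_i$-approximation; the hypothesis that there is no loop at $i$ is exactly what guarantees every path out of $i$ that returns nowhere-forbidden still has positive length, and more to the point that $\operatorname{rad}\!\operatorname{End}(P_i)$, equivalently $e_i(\operatorname{rad}\Lambda)e_i$, consists of paths of length $\geq 2$ through other vertices — this is the "any non-iso endomorphism of $P_i$ factors through another indecomposable projective" remark from the introduction, and it is what will make minimality work.

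For right minimality I would show that any endomorphism $g$ of $\bigoplus_\alpha P_{t(\alpha)}$ with $\pi g=\pi$ is an automorphism. Write $g=(g_{\alpha\beta})$ as a matrix of maps $P_{t(\beta)}\to P_{t(\alpha)}$, each given by an element of $e_{t(\alpha)}\Lambda e_{t(\beta)}$, i.e. a linear combination of paths from $t(\alpha)$ to $t(\beta)$. The equation $\pi g=\pi$ reads $\sum_\alpha \alpha\cdot g_{\alpha\beta}=\beta$ in $e_i\Lambda e_{t(\beta)}$ for every $\beta$. Comparing the length-zero parts: the right side $\beta$ is the arrow itself, a path of length $1$; on the left, $\alpha\cdot g_{\alpha\beta}$ can contribute a path of length $1$ only via the length-$0$ (scalar) part of $g_{\alpha\beta}$ when $\alpha=\beta$ (if $\alpha\neq\beta$ then $\alpha\cdot(\text{scalar})$ is a different arrow, and higher-length terms of $g_{\alpha\beta}$ give paths of length $\geq 2$). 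Hence the constant term of $g_{\beta\beta}$ is $1$ and the constant term of $g_{\alpha\beta}$ is $0$ for $\alpha\neq\beta$, so $g=\mathrm{id}+N$ with $N$ having entries in the radical; such a $g$ is invertible over the (semiperfect) endomorphism ring, so $\pi$ is right minimal.

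The main obstacle I anticipate is the bookkeeping in the minimality argument — specifically making rigorous the claim that "comparing length-$1$ components forces the scalar part of $g$ to be the identity," which requires knowing that distinct arrows out of $i$ are linearly independent in $e_i\Lambda e_{?}$ modulo longer paths and that no arrow out of $i$ lies in $I$ (admissibility of $I$ handles the latter). The no-loop hypothesis enters crucially here: if there were an arrow $i\to i$ it could be one of the $\alpha$'s, $t(\alpha)=i$, and then $g_{\alpha\alpha}$ could itself involve that loop, breaking the clean length count. Everything else is a routine unwinding of the path-algebra conventions fixed in \cref{section:setup}, in particular the identity $\sum_{\alpha\colon i\to ?}\sfrac{f}{\alpha}\cdot\alpha=f$ which does the work in the approximation step.
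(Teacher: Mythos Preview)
The paper states this lemma without proof, so there is no argument to compare against. Your approach is the standard one and is correct: factor morphisms from $P_j$ ($j\neq i$) through $\pi$ by grouping paths according to their initial arrow, and establish right minimality by showing that any $g$ with $\pi g=\pi$ agrees with the identity modulo the radical, hence is invertible. One small point you use implicitly but never state: the no-loop hypothesis is also what guarantees that the domain $\bigoplus_{\alpha} P_{t(\alpha)}$ lies in $\operatorname{add}(\Lambda/P_i)$, since every $t(\alpha)\neq i$; without this the map would not even be a candidate for a $\Lambda/P_i$-approximation. Otherwise your bookkeeping (linear independence of distinct arrows modulo $\operatorname{rad}^2$, admissibility of $I$) is exactly what is needed.
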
 

\bigskip

\begin{observation}\label{obs:mutationTriangle}
Since the derived category of $\Lambda$ is triangulated, we can complete the minimal right approximation $\left[ \bigoplus\limits_{\alpha:i\rightarrow?} P_{t(\alpha)} \xrightarrow{[\alpha]_{\alpha}} P_i \right]$ to a distinguished triangle. We denote by $P_i^*$ the cocone of the morphism $[\alpha]_{\alpha}$, and thus we get a triangle $P_i^*\rightarrow \bigoplus\limits_{\alpha:i\rightarrow?} P_{t(\alpha)} \rightarrow P_i $.
\end{observation}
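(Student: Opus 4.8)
The plan is to recognize that this statement is a formal consequence of the triangulated structure, so the only real work is to pin down the ambient category and record an explicit model for $P_i^*$. I would work inside the bounded homotopy category $K^b(\mathrm{proj}\,\Lambda)$ of complexes of finitely generated projective $\Lambda$-modules (equivalently, the perfect subcategory of the derived category $D^b(\Lambda)$), which is the natural home for tilting complexes and carries the standard triangulated structure whose distinguished triangles are, up to isomorphism, the mapping-cone triangles. Since $P_i$ and each $P_{t(\alpha)}$ are projective, the map $[\alpha]_\alpha$ from \cref{lemma:minrightapprox} is literally a morphism of complexes concentrated in degree $0$, so it makes sense to complete it inside this category.

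First I would invoke the triangle axiom (TR1): every morphism in a triangulated category extends to a distinguished triangle. Applied to $f := [\alpha]_\alpha$, this produces a distinguished triangle $\bigoplus_\alpha P_{t(\alpha)} \xrightarrow{f} P_i \to C(f) \to \bigl(\bigoplus_\alpha P_{t(\alpha)}\bigr)[1]$, where $C(f)$ is the mapping cone. Rotating this triangle one step backwards via (TR2) and setting $P_i^* := C(f)[-1]$, the cocone of $f$, yields precisely the asserted triangle $P_i^* \to \bigoplus_\alpha P_{t(\alpha)} \to P_i$.

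To make $P_i^*$ concrete, and to confirm that it lies in $K^b(\mathrm{proj}\,\Lambda)$, I would simply read off the mapping cone: since both source and target are concentrated in degree $0$, the complex $P_i^*$ is the two-term complex with $\bigoplus_\alpha P_{t(\alpha)}$ in degree $0$, with $P_i$ in degree $1$, and with differential $[\alpha]_\alpha$; the maps out of $P_i^*$ in the rotated triangle are the evident degree-$0$ projection onto $\bigoplus_\alpha P_{t(\alpha)}$ followed by $f$. This is visibly a bounded complex of projectives, so $P_i^*$ is a perfect complex and the triangle lives where we want it.

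There is no genuine obstacle here, since the existence of triangle completions is built into the axioms of a triangulated category; indeed, this is phrased as an observation precisely because it requires no new argument. The only points demanding care are bookkeeping: fixing the ambient triangulated category, orienting $f$ so that the cocone (rather than the cone) appears on the left of the triangle, and getting the single degree shift right so that $P_i^*$ is genuinely $C(f)[-1]$ and not its suspension.
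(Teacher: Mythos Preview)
Your proposal is correct and aligns with the paper's treatment: the paper offers no proof at all for this observation, treating it purely as a definition that invokes the triangulated structure (TR1 and rotation) to name the cocone $P_i^*$. Your additional step of writing down the explicit two-term complex model $\bigl[\bigoplus_\alpha P_{t(\alpha)} \xrightarrow{[\alpha]_\alpha} P_i\bigr]$ with $P_i$ in degree~$1$ is consistent with how the paper later uses $P_i^*$ (see the proof of \cref{thm:mutationAssumptions}), so nothing is missing or in tension.
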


\bigskip

\begin{definition}
Let $P_i$ be the indecomposable projective direct summand of $\Lambda=kQ/I$ corresponding to vertex $i$. The \emph{right tilting mutation of $\Lambda$ with respect to $P_i$} is defined as $\mu_i^R (\Lambda) :=\Lambda/P_i \oplus P_i^*$.
\end{definition}

\bigskip

One unfortunate property of tilting mutation is that it isn't always possible. When we take $\Lambda$, which is a tilting object, and replace the direct summand $P_i$ by $P_i^*$, it's not guaranteed that the result is again a tilting object.
However, under certain assumptions tilting mutation is always possible, as the following theorem shows.

\bigskip

\begin{theorem}\label{thm:mutationAssumptions}
Let $P_i$ be a projective indecomposable direct summand of $\Lambda=kQ/I$. If $\Hom_\Lambda(P_i^*[1], \Lambda) = 0$, then the tilting mutation $\mu_i^R (\Lambda) = \Lambda/P_i \oplus P_i^*$ is a tilting complex.
\end{theorem}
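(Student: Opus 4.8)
The plan is to verify directly that $T:=\Lambda/P_i\oplus P_i^*$ meets the two defining conditions of a tilting complex: that the thick subcategory it generates is all of $K^b(\mathrm{proj}\,\Lambda)$, and that $\Hom_\Lambda(T,T[n])=0$ for every $n\neq 0$. Throughout I would represent $P_i^*$, following \cref{obs:mutationTriangle}, by the two-term complex of projectives $\bigl(Q\xrightarrow{f}P_i\bigr)$ in cohomological degrees $0$ and $1$, with $Q:=\bigoplus_{\alpha\colon i\to ?}P_{t(\alpha)}$ and $f:=[\alpha]_\alpha$; in particular $T\in K^b(\mathrm{proj}\,\Lambda)$. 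Since $i$ has no loop, every $t(\alpha)\neq i$, so $Q\in\mathrm{add}(\Lambda/P_i)$, a fact used repeatedly below.

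Generation is the easy half and uses only the absence of a loop at $i$. Rotating the triangle of \cref{obs:mutationTriangle} gives a triangle $Q\to P_i\to P_i^*[1]\to Q[1]$, so $P_i$ lies in $\mathrm{thick}(T)$ (as $Q$ and $P_i^*$ do), hence so does $\Lambda=(\Lambda/P_i)\oplus P_i$; as $\mathrm{thick}(\Lambda)=K^b(\mathrm{proj}\,\Lambda)$, this gives $\mathrm{thick}(T)=K^b(\mathrm{proj}\,\Lambda)$.

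For self-orthogonality I would treat the four blocks of $\Hom_\Lambda(T,T[n])$, $n\neq 0$, separately. The block $\Hom_\Lambda(\Lambda/P_i,(\Lambda/P_i)[n])=\mathrm{Ext}^n_\Lambda(\Lambda/P_i,\Lambda/P_i)$ vanishes since $\Lambda/P_i$ is projective. For $\Hom_\Lambda(\Lambda/P_i,P_i^*[n])$, projectivity of $\Lambda/P_i$ identifies it with $\Hom_\Lambda(\Lambda/P_i,H^n(P_i^*))$; as $\mathrm{im}\,f=\Rad P_i$, the complex $P_i^*$ has cohomology only in degrees $0$ and $1$, with $H^1(P_i^*)=P_i/\Rad P_i=S_i$, and $\Hom_\Lambda(\Lambda/P_i,S_i)=0$ because $\Lambda/P_i$ is a sum of $P_j$ with $j\neq i$. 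For $\Hom_\Lambda(P_i^*,(\Lambda/P_i)[n])$: since $P_i^*$ is a two-term complex of projectives in degrees $0$ and $1$, $\Hom_\Lambda(P_i^*,M[n])=0$ for any module $M$ whenever $n\notin\{-1,0\}$, while the case $n=-1$ equals $\Hom_\Lambda(P_i^*[1],M)$, which for $M=\Lambda/P_i$ — whose indecomposable summands are summands of $\Lambda$ — vanishes by the standing hypothesis $\Hom_\Lambda(P_i^*[1],\Lambda)=0$.

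The last block $\Hom_\Lambda(P_i^*,P_i^*[n])$ is the heart of the matter and the main obstacle, and I would compute it as the $n$-th cohomology of the total Hom-complex $\Hom^\bullet(P_i^*,P_i^*)$, which is concentrated in degrees $-1,0,1$, so only $n=\pm 1$ remain. Its degree-$1$ cohomology is $\Hom_\Lambda(Q,P_i)$ modulo, in particular, the subspace $\{\,f\circ u\mid u\in\End_\Lambda(Q)\,\}$; since $f$ is a right $\mathrm{add}(\Lambda/P_i)$-approximation of $P_i$ by \cref{lemma:minrightapprox} and $Q\in\mathrm{add}(\Lambda/P_i)$, every map $Q\to P_i$ factors through $f$, so this subspace exhausts $\Hom_\Lambda(Q,P_i)$ and $H^1=0$ — note this step uses the approximation property, not the $\Hom$-vanishing hypothesis. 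Its degree-$(-1)$ cohomology consists of the maps $g\colon P_i\to Q$ with $g\circ f=0$ and $f\circ g=0$, hence lies inside $\{\,g\colon P_i\to Q\mid g\circ f=0\,\}=\Hom_\Lambda(P_i^*,Q[-1])=\Hom_\Lambda(P_i^*[1],Q)$, which vanishes by the hypothesis since every indecomposable summand of $Q$ is a summand of $\Lambda$; so $H^{-1}=0$. This, together with the $n=-1$ case of the previous block, is exactly where $\Hom_\Lambda(P_i^*[1],\Lambda)=0$ is used, and the only genuinely delicate point is keeping careful track of the shift $P_i^*[1]$. Assembling the four blocks yields self-orthogonality, and with generation this shows that $T=\mu_i^R(\Lambda)$ is a tilting complex.
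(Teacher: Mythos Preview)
Your proof is correct and follows essentially the same approach as the paper's: both represent $T$ as a two-term complex of projectives, use the right-approximation property of $[\alpha]_\alpha$ to kill $\Hom(T,T[1])$, invoke the hypothesis $\Hom_\Lambda(P_i^*[1],\Lambda)=0$ to kill $\Hom(T,T[-1])$, and deduce generation from the mutation triangle together with $Q\in\mathrm{add}(\Lambda/P_i)$. The only difference is organizational: the paper treats $T=\Lambda/P_i\oplus P_i^*$ as a single two-term complex and argues globally, whereas you split $\Hom(T,T[n])$ into its four $2\times 2$ blocks and handle each separately (in particular using $H^1(P_i^*)\cong S_i$ for the block $\Hom(\Lambda/P_i,P_i^*[1])$); this makes your write-up more explicit but is not a genuinely different argument.
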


\begin{proof}
To show that $\Lambda/P_i \oplus P_i^*$ is a tilting complex there are two things we need to check, namely that $\Lambda/P_i \oplus P_i^*$ has no shifted endomorphisms, and that it generates per$(\Lambda)$ as a triangulated category. The module $\Lambda/P_i \oplus P_i^*$ can be viewed as a complex as follows
$$\dots\rightarrow 0 \rightarrow \Lambda/P_i \oplus \bigoplus\limits_{\alpha:i\rightarrow ?} P_{t(\alpha)} \xrightarrow{[0 \ \alpha]_\alpha} P_i \rightarrow 0 \rightarrow \dots$$
Since it only has two nonzero components, any morphism from the complex shifted by two or more in either direction to itself must be zero. The fact that $[\alpha]$ is a right approximation of $P_i$ ensures that zero is the only morphism from the complex shifted by $-1$ to itself. In general, there could be nonzero morphisms from the complex shifted by $1$ to itself, which is why we need the assumption that $\Hom(P_i^*[1], \Lambda)=0$.
 
So given that this assumption is true, $\Lambda/P_i\oplus P_i^*$ will have no shifted endomorphisms. 
To see that it also generates per$(\Lambda)$ as a triangulated category, notice that $\bigoplus_{\alpha}P_{t(\alpha)}$ is a direct summand in $\text{add}(\Lambda/P_i)$, and that it, as well as $P_i$ and $P_i^*$ appear in the triangle
\begin{equation*}
P_i^*\rightarrow \bigoplus\limits_{\alpha: i\rightarrow ?} P_{t(\alpha)} \rightarrow P_i \rightarrow P_i^*[1].
\end{equation*}
This means that $\Lambda/P_i\oplus P_i^*$ and $\Lambda/P_i \oplus P_i\simeq \Lambda$ generate the same triangulated subcategory of the derived category of $\Lambda$. Since $\Lambda$ is a tilting complex, that subcategory is per$(\Lambda)$, which is what we need.
\end{proof}

\bigskip

In the theorem above, the condition is on the algebra $\Lambda$, rather than on the quiver $Q$. 
Still, in some cases it will be enough to look at the quiver in order to conclude that mutation is impossible at a certain vertex.
Clearly, there exists a nonzero morphism ending in $P_i$ if and only if there is at least one arrow starting in vertex $i$ in the quiver. In other words, there is an equivalence
\begin{equation*}
\Hom_\Lambda(\Lambda/P_i, P_i) \neq 0 \iff \exists \alpha \in Q_1 \colon s(\alpha) = i.
\end{equation*}
Notice now that if there exists a nonzero morphism
In other words, condition $1$ is satisfied if and only if there exists at least one arrow in $Q$ starting in vertex $i$. So when we consider a quiver to determine which vertices allow for mutation, we can immediately discard any vertex which has no arrows going out of it.
Condition $2$, on the other hand, is in general not equivalent to a condition on the quiver. It can, however, be used to give an easy criterion on the quiver for when mutation is not possible, as we can see by the following corollary. 

\bigskip

\begin{theorem} 
Let $(Q,I)$ be a quiver with relations, and let $i$ be a vertex in $Q$. Two cases where tilting mutation of $\Lambda=kQ/I$ with respect to $P_i$ is not possible are:
\begin{itemize}
\item If there are no arrows in $Q$ going out of $i$.
\item If there is at least one arrow out of $i$ in $Q$, and there exists a nonzero path in $Q$ ending in $i$ such that composing that path with each arrow out of $i$ gives zero.
\end{itemize}
\end{theorem}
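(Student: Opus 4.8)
The plan is to reduce each of the two cases to the obstructions already analysed in \cref{thm:mutationAssumptions}, namely that tilting mutation fails if either the approximation used to define $P_i^*$ is not a genuine right approximation covering $P_i$, or if $\Hom_\Lambda(P_i^*[1],\Lambda)\neq 0$. For the first bullet, if there are no arrows out of $i$ then by \cref{lemma:minrightapprox} the minimal right $\Lambda/P_i$-approximation of $P_i$ is the zero map $0\to P_i$, so $P_i^*\simeq P_i[-1]$. Then $\mu_i^R(\Lambda)=\Lambda/P_i\oplus P_i[-1]$, which has a nonzero self-extension: $\Hom(P_i[-1],P_i)=\End(P_i)\neq 0$ sits in degree $+1$, so the complex has shifted endomorphisms and cannot be a tilting complex. (Equivalently, one can observe that $\Hom_\Lambda(P_i^*[1],\Lambda)=\Hom_\Lambda(P_i,\Lambda)\neq 0$.) This case is essentially immediate.

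For the second bullet, I would work directly with the triangle of \cref{obs:mutationTriangle},
\[
P_i^*\to \bigoplus_{\alpha:i\to ?}P_{t(\alpha)}\xrightarrow{[\alpha]_\alpha} P_i\to P_i^*[1],
\]
and show that the hypothesis forces $\Hom_\Lambda(P_i^*[1],\Lambda)\neq 0$, so that \cref{thm:mutationAssumptions} does not apply — and more strongly, that $\mu_i^R(\Lambda)$ genuinely fails to be a tilting complex. Applying $\Hom_\Lambda(-,P_h)$ to the triangle, where $h$ is the source of the given nonzero path $p\colon h\to i$, gives a long exact sequence relating $\Hom(P_i^*[1],P_h)$, $\Hom(P_i,P_h)$ and $\Hom(\bigoplus_\alpha P_{t(\alpha)},P_h)$. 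The element $p\in\Hom_\Lambda(P_i,P_h)$ (viewing the path as a morphism of projectives as in \cref{section:setup}) maps, under the map induced by $[\alpha]_\alpha$, to $(p\alpha)_\alpha\in\bigoplus_\alpha\Hom(P_{t(\alpha)},P_h)$; the hypothesis that every composite $p\alpha$ is zero in $\Lambda$ says precisely that this image vanishes, so $p$ lifts to a nonzero class in $\Hom_\Lambda(P_i^*[1],P_h)\subseteq\Hom_\Lambda(P_i^*[1],\Lambda)$. Since this group is one of the potential shifted-endomorphism contributions for $\mu_i^R(\Lambda)$ identified in the proof of \cref{thm:mutationAssumptions}, the mutation is not a tilting complex.

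The step I expect to require the most care is the last one: having produced a nonzero element of $\Hom_\Lambda(P_i^*[1],\Lambda)$, one must check it actually obstructs the tilting property rather than merely failing the sufficient condition of \cref{thm:mutationAssumptions}. Here I would go back to the complex model $\Lambda/P_i\oplus\bigoplus_\alpha P_{t(\alpha)}\xrightarrow{[0\ \alpha]_\alpha}P_i$ used in that proof and verify that the class coming from $p$ represents a genuinely nonzero degree-$1$ homomorphism from the complex to (a summand of) $\Lambda$ concentrated in degree $0$ — i.e. that $p$ does not factor through the differential $[0\ \alpha]_\alpha$, which is exactly the statement that $(p\alpha)_\alpha=0$ while $p\neq 0$, so no factorization is possible since any such factorization would have to land in the degree-$0$ part $\bigoplus_\alpha P_{t(\alpha)}$ and reproduce the $p\alpha$. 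One small subtlety to address is admissibility: the path $p$ is assumed nonzero in $\Lambda$, so it is not killed by $I$, and minimality of the approximation in \cref{lemma:minrightapprox} guarantees $[\alpha]_\alpha$ has image in the radical, which is what makes the non-factorization argument clean. Once this is in place both bullets follow, completing the proof.
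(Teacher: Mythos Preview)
Your proposal is correct and matches the paper's approach: for the second bullet the paper produces the same morphism you extract from the long exact sequence, but writes it directly as a chain map of two-term complexes (the map $\beta\colon P_i\to P_j$ paired with zero on $\bigoplus_\alpha P_{t(\alpha)}$, which commutes precisely because each composite $\alpha\beta$ vanishes), and then concludes $\Hom_\Lambda(P_i^*[1],\Lambda)\neq 0$. For the first bullet the paper is even terser than you---it simply observes that the approximating sum $\bigoplus_{\alpha:i\to ?}P_{t(\alpha)}$ is empty so there is no nonzero approximation---and your additional paragraph checking that the degree-one morphism is not null-homotopic fills in a detail the paper leaves implicit.
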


\begin{proof}
If there are no arrows $\alpha$ going out of $i$ then there will be no modules $P_{t(\alpha)}$, and the direct sum $\bigoplus\limits_{\alpha:i \rightarrow ?} P_{t(\alpha)}$ will be equal to zero. This means that there is no nonzero right approximation $\bigoplus\limits_{\alpha:i \rightarrow ?} P_{t(\alpha)}\rightarrow P_i$, so we can't construct $P_i^*$. Thus, mutation is not possible.

Assume now that there is at least one arrow out of $i$, and that there exists a nonzero path $\beta\colon j\rightarrow i$ for some vertex $j$, such that $\alpha\beta = 0$ for each arrow $\alpha\colon i\rightarrow t(\alpha)$. If we view the module $\Lambda/P_i \oplus P_i^*$ as a complex in the same way as before, then this gives us the following diagram.
\begin{equation*}
\begin{tikzcd}
\cdots \arrow[r] & 0 \arrow[d] \arrow[r] & \bigoplus\limits_{\alpha:i\rightarrow ?} P_{t(\alpha)} \arrow[rr, "{[\alpha]_{\alpha}}"] \arrow[d] & & P_i \arrow[d, "{\beta}"] \arrow[r] & 0 \arrow[r] \arrow[d] & \cdots \\
\cdots \arrow[r] & 0 \arrow[r] & 0 \arrow[rr] & & P_j \arrow[r] & 0 \arrow[r] & \cdots 
\end{tikzcd}
\end{equation*}
Notice now that since $\alpha\beta = 0$ for each $\alpha$, the middle square will commute. Thus $\beta$ defines a morphism of complexes between $[\bigoplus_\alpha P_{t(\alpha)}\rightarrow P_i]\simeq P_i^*[1]$ and $P_j$. And composing this with the inclusion of $P_j$ into $\Lambda/P_i$ gives a nonzero morphism $P_i^*[1] \rightarrow \Lambda$, hence $\Hom(P_i^*[1], \Lambda)\neq 0$.
\end{proof}

In particular, if $\alpha$ is the only arrow out of $i$, then mutation is impossible if there is a minimal zero relation whose last arrow is $\alpha$. 
\bigskip

Let's make the following observation about relations in a quiver. Any relation from vertex $i$ to some vertex $k$ is given as some linear combination of paths from $i$ to $k$ being equal to zero. One consequence of this is that, since each of those paths begin with an arrow $i \xrightarrow{\alpha} t(\alpha)$ for some vertex $t(\alpha)$, we can view a relation starting in $i$ as a collection of paths starting in such vertices $t(\alpha)$, satisfying that the path starting in $i$ obtained by precomposing with the arrows $\alpha$  and taking the sum is zero. 

Roughly speaking, this means that a relation starting in $i$ corresponds to a combination of paths starting in the vertices that are hit by arrows from $i$, that become zero if you compose them with the arrows from $i$ to get a collection of paths starting in $i$. We can use this to give a projective resolution of the simple module corresponding to vertex $i$, which will be useful later.

\bigskip

\begin{lemma}\label{lemma:projres}
The sequence
\begin{equation*}
\begin{tikzcd}
\bigoplus\limits_{r:i\dashrightarrow ?} P_{t(r)} \arrow[r, "{[ \sfrac{r}{\alpha} ]_{r,\alpha}}"] & \bigoplus\limits_{\alpha: i\rightarrow ?} P_{t(\alpha)} \arrow[r, "{[\alpha]_\alpha}"] & P_i \arrow[r] & S_i \arrow[r] & 0
\end{tikzcd}
\end{equation*}
defines a projective resolution for $S_i$, the simple $\Lambda$-module at $i$.
\end{lemma}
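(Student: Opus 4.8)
\emph{Proof proposal.} The plan is to identify the three maps explicitly, check that the sequence is a complex, and then verify exactness at each spot, with the only real work being exactness at $\bigoplus_{\alpha}P_{t(\alpha)}$. From right to left the maps are: the canonical surjection $P_i\twoheadrightarrow S_i$; the map $[\alpha]_\alpha$ whose restriction to the summand $P_{t(\alpha)}$ is precomposition with the arrow $\alpha\colon i\to t(\alpha)$; and the map $[\sfrac{r}{\alpha}]_{r,\alpha}$ whose component $P_{t(r)}\to P_{t(\alpha)}$ is precomposition with $\sfrac{r}{\alpha}\colon t(\alpha)\to t(r)$. That this is a complex is immediate: the image of $[\alpha]_\alpha$ is the submodule of $P_i$ generated by the arrows out of $i$, which is $\Rad P_i=\Ker(P_i\to S_i)$; and the $r$-th component of $[\alpha]_\alpha\circ[\sfrac{r}{\alpha}]_{r,\alpha}$ is precomposition with $\sum_{\alpha\colon i\to ?}\sfrac{r}{\alpha}\cdot\alpha=r$, which is zero in $\Lambda$ since $r$ is a relation. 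Exactness at $S_i$ is the surjectivity of $P_i\to S_i$, and exactness at $P_i$ is the equality $\Img[\alpha]_\alpha=\Rad P_i$, which holds because $\Rad\Lambda$ is generated by all arrows. (One may also note that $[\alpha]_\alpha$ is a projective cover of $\Rad P_i$, since it induces an isomorphism $\bigoplus_\alpha S_{t(\alpha)}\to\Rad P_i/\Rad^2 P_i$, both sides having basis the arrows out of $i$; hence $\Ker[\alpha]_\alpha=\Omega^2 S_i$. This is not needed for the statement but explains why the presentation is minimal.)

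For exactness at $\bigoplus_\alpha P_{t(\alpha)}$ I would argue $\Ker[\alpha]_\alpha\subseteq\Img[\sfrac{r}{\alpha}]_{r,\alpha}$ as follows. Take $(p_\alpha)_\alpha$ in the kernel and lift each $p_\alpha$ to $\tilde p_\alpha\in kQ$ supported on paths starting at $t(\alpha)$; then $y:=\sum_\alpha\tilde p_\alpha\alpha$ lies in $I$ and is supported on paths starting at $i$. The key input is the standard structural fact that such a $y$ can be written as $y=\sum_{r}\tilde q_r\,\tilde r+\sum_\alpha\iota_\alpha\alpha$, where $r$ runs over the minimal relations starting at $i$, $\tilde r$ is a lift of $r$, each $\tilde q_r\in kQ$ is supported on paths starting at $t(r)$, and each $\iota_\alpha\in I$ is supported on paths starting at $t(\alpha)$; this is proved by induction on the maximal path length occurring in $y$, using that $I$ is generated as a two-sided ideal by the minimal relations and peeling off the first arrow of each path. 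Granting this and substituting $\tilde r=\sum_\alpha\widetilde{\sfrac{r}{\alpha}}\,\alpha$, one gets $\sum_\alpha\bigl(\tilde p_\alpha-\sum_r\tilde q_r\widetilde{\sfrac{r}{\alpha}}-\iota_\alpha\bigr)\alpha=0$ in $kQ$. Since distinct arrows out of $i$ are the initial arrows of disjoint families of paths, and precomposition with an arrow is injective on $kQ$, each bracket vanishes, so $\tilde p_\alpha=\sum_r\tilde q_r\widetilde{\sfrac{r}{\alpha}}+\iota_\alpha$. Reducing modulo $I$, which kills the $\iota_\alpha$, gives $p_\alpha=\sum_r q_r\cdot\sfrac{r}{\alpha}$ in $\Lambda$, where $q_r$ is the class of $\tilde q_r$ in $P_{t(r)}$; that is, $(p_\alpha)_\alpha=[\sfrac{r}{\alpha}]_{r,\alpha}\bigl((q_r)_r\bigr)$, as required.

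The main obstacle is precisely the structural fact invoked in the second paragraph: controlling which elements of the two-sided ideal $I$ are supported over the fixed vertex $i$, and showing that, modulo the correction term $\sum_\alpha\iota_\alpha\alpha$ (which dies in $\bigoplus_\alpha P_{t(\alpha)}$), they are generated by the relations that literally start at $i$. This is where admissibility of $I$ and the choice of a minimal set of relations genuinely enter; everything else is linear algebra in $kQ$ together with the defining identity $\sum_{\alpha}\sfrac{r}{\alpha}\cdot\alpha=r$. An alternative is to avoid the direct argument altogether: tensoring the (length-one) $kQ$-projective resolution $0\to\bigoplus_\alpha\widehat{P}_{t(\alpha)}\to\widehat{P}_i\to S_i\to 0$ with $\Lambda$ identifies $\Ker[\alpha]_\alpha$ with $\operatorname{Tor}_1^{kQ}(\Lambda,S_i)\cong Ie_i/\sum_\alpha Ie_{t(\alpha)}\alpha$, after which one only has to recognize that this $\Lambda$-module is generated by the classes of the relations starting at $i$ and that the induced presentation is $[\sfrac{r}{\alpha}]_{r,\alpha}$ — or one can simply cite the known description of the first three terms of the minimal projective resolution of a simple module over $kQ/I$.
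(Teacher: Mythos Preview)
Your argument is correct and follows the same route as the paper: verify the sequence is a complex via $\sum_\alpha \sfrac{r}{\alpha}\cdot\alpha=r=0$, identify $\Img[\alpha]_\alpha=\Rad P_i$, and then argue that $\Ker[\alpha]_\alpha$ is generated by the relations starting at $i$. The paper treats the last step as essentially definitional (``this is just another way to describe the relations starting in vertex $i$''), whereas you unpack it carefully by lifting to $kQ$ and decomposing an element of $e_iI$ into a part coming from minimal relations at $i$ plus a correction in $\sum_\alpha I\alpha$; your $\operatorname{Tor}$ alternative is a clean way to package the same content, though the paper does not take that viewpoint.
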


\begin{proof}
We start by showing that it is in fact a complex. Each component of the map $[ \sfrac{r}{\alpha}]_{r,\alpha}$ is given by the paths $t(\alpha)\rightarrow t(r)$ which are obtained by removing the arrow $\alpha\colon i \rightarrow t(\alpha)$ from the paths $i\rightarrow t(r)$ corresponding to the relations $r\colon i\dashrightarrow t(r)$ passing through $t(\alpha)$. 
For each relation $r$, composing $[\sfrac{r}{\alpha}]_\alpha\colon P_{t(r)}\rightarrow \bigoplus_\alpha P_{t(\alpha)}$ with $[\alpha]_\alpha\colon \bigoplus_\alpha P_{t(\alpha)}\rightarrow P_i$ will yield
$$[\sfrac{r}{\alpha}]_\alpha\cdot[\alpha]_\alpha=\sum_{\alpha:i\rightarrow ?} \sfrac{r}{\alpha}\circ\alpha=r=0.$$
We see that the composition is equal to the relation $r$ (we simply recreate each path in the linear combination by adding the first arrows), which is equal to zero by definition. This is true for all relations $r\colon i\dashrightarrow ?$, hence the composition of $[\sfrac{r}{\alpha}]_{r,\alpha}$ and $[\alpha]_\alpha$ is zero, and we indeed have a complex.
\bigskip

Now, we'll show that the complex is exact. Recall that $P_i$ consists of all paths that start in vertex $i$, and $S_i$ contains only the trivial path in vertex $i$. Since $P_i$ is a projective cover of $S_i$, it is clearly exact in the right-hand term. The kernel of the map $P_i\rightarrow S_i$ will correspond to all paths that start in vertex $i$, except the trivial path. In other words, the kernel consists of all paths starting in vertex $i$ of length $\geq 1$, which is precisely the same as the image of the map $[\alpha]_\alpha$. So the complex is exact in the middle term. Finally, the kernel of $[\alpha]_\alpha$ consists of all paths starting in the $j$-vertices such that precomposing with the arrows $\alpha$ makes (the corresponding linear combination of) them zero. But this is just another way to describe the relations starting in vertex $i$, which is the image of the map $[\sfrac{r}{\alpha}]_{r,\alpha}$. Thus, the complex is exact, which means that the projective terms form a resolution. 
\end{proof}

With these definitions and results, we are ready to start working towards proving the main result of the paper.

\section{Proof of main result}

In this section we will prove that applying the steps given in \cref{section:mutationsteps} to a quiver corresponds to right tilting mutation of its path algebra.
Assume we are given a quiver with relations $(Q,I)$ and a vertex $i$ in $Q$, such that $\Lambda= kQ/I$ satisfies the conditions in \cref{thm:mutationAssumptions}. 
We will show that we can construct a new quiver, $Q_i^{i^*}$, which is isomorphic to the quiver we get when we apply the mutation steps to $(Q,I)$, and whose path algebra is isomorphic to $\End_{\Lambda}(\Lambda/P_i\oplus P_i^*)$.

In order to define the quiver $Q_i^{i^*}$, we must first construct two other quivers, $Q_i$ and $Q^{i^*}$. Let $Q$ be a quiver, let $i$ be a vertex in $Q$ and let $i^*$ not be a vertex in $Q$. Then:
\begin{itemize}
\item $Q_i$ is equal to $Q$ except that vertex $i$ has been removed, and the arrows and relations have been modified so that the path algebra of $Q_i$ is isomorphic to $\End_\Lambda(\Lambda/P_i)$.
\item $Q^{i^*}$ is equal to $Q$ except that a new vertex $i^*$ has been added, along with certain arrows and relations, so that the path algebra of $Q^{i^*}$ is isomorphic to $\End_\Lambda(\Lambda\oplus P_i^*)$.
\end{itemize}

The exact constructions of these two quivers are given in \cref{lemma:QuiverRemoveVertex} and \cref{lemma:Qi*}, respectively.
In terms of these two constructions, the quiver we are after can be given as $Q_i^{i^*}:= (Q^{i^*})_i$, meaning that we first add the vertex $i^*$ to $Q$, and then we remove vertex the $i$ from the resulting quiver. Then, the path algebra of $Q_i^{i^*}$ will be isomorphic to $\End_\Lambda(\Lambda/P_i\oplus P_i^*)$, which is what we want.

What we will do now, is examine the structure of $\End_{\Lambda}(\Lambda/P_i\oplus P_i^*)$ by using the known structure of
$\End_{\Lambda}(\Lambda)\simeq \Lambda$, and we will do this in two steps.
First we show how $\End_\Lambda(\Lambda)$ changes when we add the direct summand $P_i^*$, meaning we look at $\End_{\Lambda}(\Lambda\oplus P_i^*)$. Then we show how $\End_\Lambda(\Lambda\oplus P_i^*)$ changes when we remove the summand $P_i$.
\begin{equation*}
	\begin{tikzcd}
		\End_{\Lambda}(\Lambda) \arrow[rr, dotted] \arrow[dr, "{\text{Add } P_i^*}"', sloped] & & \End_{\Lambda}(\Lambda/P_i\oplus P_i^*) \\
		& \End_{\Lambda}(\Lambda \oplus P_i^*) \arrow[ur, "{\text{Remove } P_i}"', sloped]& 
	\end{tikzcd}
\end{equation*}

The first step corresponds to adding a new vertex $i^*$ to the quiver $Q$, and changing/adding arrows and relations in accordance with $\End_{\Lambda}(\Lambda\oplus P_i^*)$. We denote this quiver by $Q^{i^*}$. The next step corresponds to removing the vertex $i$, along with any arrow and relation starting or ending in it (but keeping compositions through $i$). We will denote this quiver by $Q_i^{i^*}$.

In practice what we will do is use the mutation triangle defined in \cref{obs:mutationTriangle}, together with the known structure of $\End_\Lambda(\Lambda)$ to determine the structure of $\End_\Lambda(\Lambda\oplus P_i^*)$, and then we'll show how the resulting path algebra is affected by the removal of vertex $i$, giving us $\Lambda/P_i\oplus P_i^*$. For this last part we will use the following lemma, which states what happens to a path algebra when we remove a vertex from it. 

\begin{lemma}\label{lemma:QuiverRemoveVertex}
Let $\Lambda = kQ/I$ be a path algebra. Let $i$ be a vertex in $Q$ with no cycles of length one, and with no minimal relations from $i$ to $i$. The quiver associated to $\mathrm{End}_\Lambda(\Lambda/P_i)\simeq (1-e_i)\Lambda(1-e_i)$ is obtained by removing vertex $i$ from the quiver $Q$. We will denote this quiver by $Q_i$, and its arrows and relations are explicitly given by the following:
\begin{itemize}
\item Any arrow in $Q$ that neither starts nor ends in vertex $i$ is also an arrow in $Q_i$. The same is true for relations.
\item If there are arrows $\beta\colon h\rightarrow i$ and $\alpha\colon i\rightarrow j$ in $Q$, then there is an arrow $\alpha\beta\colon h\rightarrow j$ in $Q_i$.
\item If there is an arrow $\alpha\colon i\rightarrow j$ and a relation $r\colon g\dashrightarrow i$ in $Q$, then $\alpha r\colon g\dashrightarrow j$ is a relation in $Q_i$.
\item If there is an arrow $\beta\colon h\rightarrow i$ and a relation $r'\colon i\dashrightarrow k$ in $Q$, then $r'\beta\colon h\dashrightarrow k$ is a relation in $Q_i$.
\end{itemize}
\end{lemma}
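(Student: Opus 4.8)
The plan is to exhibit an explicit algebra isomorphism $kQ_i/I_i\xrightarrow{\sim}(1-e_i)\Lambda(1-e_i)$, where $Q_i$ and $I_i$ denote the quiver and the ideal generated by the relations listed in the statement; by the identification already recorded in the statement this proves the lemma. Write $e:=1-e_i$. Since $(1-e_i)\Lambda(1-e_i)=e(kQ/I)e\cong e\,kQ\,e\,/\,(e\,kQ\,e\cap I)$, everything takes place inside the subalgebra $e\,kQ\,e$ of $kQ$, whose natural $k$-basis is the set of paths of $Q$ with source and target both different from $i$. I would first introduce the \emph{unfolding} homomorphism $\widetilde{\phi}\colon kQ_i\to kQ$: on generators it sends an arrow of $Q_i$ that is inherited from an arrow of $Q$ avoiding $i$ to itself, and sends a new arrow $\alpha\beta$ (coming from $\beta\colon h\to i$ and $\alpha\colon i\to j$ in $Q$) to the length-two path $\alpha\beta$ of $Q$. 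Composing with $kQ\twoheadrightarrow\Lambda$ yields an algebra homomorphism $\phi\colon kQ_i\to (1-e_i)\Lambda(1-e_i)$, and the lemma amounts to saying that $\phi$ is surjective with kernel exactly $I_i$.

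The key combinatorial point, which is where the hypothesis ``no loop at $i$'' enters, is that every path $p$ of $Q$ with $s(p)\neq i\neq t(p)$ can be written in a \emph{unique} way as a product of arrows of $Q_i$ under $\widetilde{\phi}$: reading along $p$, each visit to $i$ is strictly internal, hence flanked by an arrow $?\to i$ and an arrow $i\to ?$, and since no two visits to $i$ are adjacent these flanking pairs are disjoint and must be read as the new arrows $\alpha\beta$, while the remaining stretches of $p$ avoid $i$ and are read off as arrows of $Q$. This shows simultaneously that $\widetilde{\phi}$ is injective with image $e\,kQ\,e$ and that $\phi$ is surjective; hence $\widetilde{\phi}$ identifies $kQ_i$ with $e\,kQ\,e$, and the lemma reduces to the single identity $\widetilde{\phi}(I_i)=e\,kQ\,e\cap I$.

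One inclusion is immediate: each listed relation maps under $\widetilde{\phi}$ into $I$ — a relation of $Q$ avoiding $i$ already lies in $I$, while $\alpha r$ and $r'\beta$ lie in $I$ because $I$ is a two-sided ideal — so $\widetilde{\phi}(I_i)\subseteq e\,kQ\,e\cap I$. For the converse I would take $\xi\in e\,kQ\,e\cap I$, write it as a sum of terms $u\rho v$ with $\rho$ a minimal relation of $Q$ and $u,v$ paths, and, after applying $e(-)e$, assume $t(u)\neq i$ and $s(v)\neq i$ in each term; it then suffices to show $u\rho v\in\widetilde{\phi}(I_i)$ for one such term. Split on the endpoints of $\rho\colon x\to y$. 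If $x\neq i\neq y$, then $\rho$ is a relation of $Q_i$ (first bullet) and $u,v$ unfold to paths of $Q_i$, so $u\rho v$ is $\widetilde{\phi}$ of a product of $Q_i$-paths with the $Q_i$-version of $\rho$. If $x=i$ and $y\neq i$, then $v$ is nontrivial; let $\beta\colon m\to i$ be its last arrow (so $m\neq i$, no loop), write $v=\beta v'$, and note that $\rho\beta$ is precisely a relation of the form $r'\beta$ while $u$ and $v'$ unfold to $Q_i$-paths, whence $u\rho v=u(\rho\beta)v'\in\widetilde{\phi}(I_i)$. The case $y=i\neq x$ is symmetric, peeling the first arrow off $u$ and using a relation $\alpha r$. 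The case $x=y=i$ is excluded precisely because there are no minimal relations from $i$ to $i$.

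The main obstacle is this last inclusion — the claim that the stated relations already generate the whole kernel — and it comes down to the bookkeeping of how an occurrence of the vertex $i$ inside a generator $u\rho v$ of $I$ is absorbed: into a new arrow $\alpha\beta$ if it is internal, or into a relation $\alpha r$ / $r'\beta$ if it is an endpoint of $\rho$. This is exactly where the two hypotheses are used: the absence of a loop at $i$ forces all the relevant intermediate vertices to be different from $i$, legitimising the ``peeling'' and the unfolding, and the absence of a minimal relation $i\to i$ kills the one configuration the listed relations cannot account for. I would close by recording, in the spirit of the Note after \cref{theorem:mainResult}, that some of the produced relations (for instance $\alpha\beta=0$ coming from a relation $\alpha\beta$ of $Q$ with $h\neq i\neq j$) may contain a term of length one, so that strictly speaking the Gabriel quiver of $(1-e_i)\Lambda(1-e_i)$ is obtained from $Q_i$ by cancelling each such relation against its arrow; this does not affect the algebra isomorphism $kQ_i/I_i\cong (1-e_i)\Lambda(1-e_i)$.
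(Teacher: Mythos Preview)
Your argument is correct and in fact considerably more thorough than the paper's own proof. The paper's proof is a short sketch: it simply observes that paths not touching $i$ are preserved, that a composition $\alpha\beta$ through $i$ becomes indecomposable once $i$ is removed (hence an arrow), and that relations into or out of $i$ survive after pre-/post-composing with one arrow. It does not set up an explicit isomorphism, and it does not check the harder direction --- that the listed relations already generate \emph{all} relations of $(1-e_i)\Lambda(1-e_i)$ --- nor does it isolate precisely where the two hypotheses are consumed.

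Your approach buys exactly that: by constructing the unfolding homomorphism $\widetilde{\phi}\colon kQ_i\to e\,kQ\,e$ and proving it is an isomorphism of algebras carrying $I_i$ onto $e\,kQ\,e\cap I$, you turn the lemma into an honest identity of ideals. The case split on the endpoints of a minimal relation $\rho$ in a term $u\rho v$ is the right bookkeeping, and your use of ``no loop at $i$'' (to make visits to $i$ isolated and to guarantee $m\neq i$ when peeling $\beta$) and of ``no minimal relation $i\to i$'' (to rule out the one configuration the four bullets cannot absorb) is exactly where those hypotheses belong. One small remark: when you write ``$\rho$ is a relation of $Q_i$ (first bullet)'' in the case $x\neq i\neq y$, what you use is that the folded element $\rho^{\mathrm{fold}}\in kQ_i$ satisfies $\widetilde{\phi}(\rho^{\mathrm{fold}})=\rho$; this is implicit in your bijection of path bases, but it is worth saying once so the reader sees that ``relation of $Q_i$'' here means the $\widetilde{\phi}$-preimage of $\rho$, not $\rho$ literally. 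Your closing remark about possible non-admissibility and cancellation against arrows is well placed and mirrors the paper's own Note.
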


\textbf{Note:} There is no ambiguity when we denote arrows and relations in $Q_i$ as the compositions of arrows/relations in $Q$, because any time we do so those arrows/relations are not themselves in $Q_i$.

\begin{proof}[Proof of lemma]
Because the only paths in $Q$ which are not paths in $Q_i$ are those that begin or end in vertex $i$, any path that does neither will be preserved. And since arrows are paths of length one, all arrows that neither begin nor end in vertex $i$ will still be arrows in $Q_i$. This also applies to relations. 
Assume now that we have arrows $\beta\colon h\rightarrow i$ and $\alpha\colon i\rightarrow j$ in $Q$. The composition $\alpha\beta$ is a path from $h$ to $j$, and will therefore be preserved. However, since neither $\alpha$ nor $\beta$ will appear in $Q_i$, we can't decompose $\alpha\beta$ in $Q_i$. This means that the length of $\alpha\beta$ is $1$ in $Q_i$, so it is an arrow. Similarly, for a relation into (resp. out of) vertex $i$, postcomposing with an arrow out of $i$ (resp. precomposing with an arrow into $i$) will give a relation which is preserved in $Q_i$. 

\end{proof}

\bigskip

We will now turn our attention to the first part of the proof of the main result, namely determining how the quiver $Q$ changes when we add the vertex $i^*$ to it. 
Combined with the above lemma, this will allow us to prove the main result.

\bigskip

\subsection*{The structure of \texorpdfstring{$\End(\Lambda\oplus P_i^*)$}{End(Λ⊕Pi*)}}
We begin by considering $\End_\Lambda(\Lambda\oplus P_i^*)$. We want to find a generating subset (which will correspond to the arrows in the quiver $Q^{i^*}$). Notice that we can split this into four direct summands, which can be considered individually:
$$\End_\Lambda(\Lambda\oplus P_i^*) \simeq \begin{bmatrix}\End_{\Lambda}(\Lambda)  & \Hom_{\Lambda}(\Lambda, P_i^*) \\ \Hom_{\Lambda}(P_i^*, \Lambda) & \End_{\Lambda}(P_i^*)\end{bmatrix}.$$ 
We already know the structure of $\End_\Lambda(\Lambda)$, so we only need to examine the other three direct summands. The sets $\Hom_\Lambda(\Lambda, P_i^*)$ and $\Hom_\Lambda(P_i^*,\Lambda)$ correspond to paths out of and into $i^*$ in the quiver $Q^{i^*}$, respectively (recall that a path $i\rightarrow j$ gives a map $P_i\leftarrow P_j$). $\End_{\Lambda}(P_i^*)$ corresponds to paths that both start and end in $i^*$. This always contains the trivial path $e_i^*$, but could also contain cycles of length $\geq 1$.

Recall that $P_i^*$ is defined by completing the right $\Lambda/P_i$-approximation $\bigoplus\limits_{\alpha: i\rightarrow ?} P_{t(\alpha)}\rightarrow P_i$ of $P_i$ to the distinguished triangle
$$P_i^*\xrightarrow{[\alpha^*]_\alpha} \bigoplus\limits_{\alpha:i\rightarrow ?} P_{t(\alpha)}\xrightarrow{[\alpha]_\alpha} P_i.$$

Throughout this proof, whenever we apply the functor $\Hom_{\Lambda}(\Lambda, - )$ to a map $P_j\xrightarrow{\alpha} P_i$, we will denote the induced map as $\alpha$ as well (same for the corresponding contravariant $\Hom$-functor). This abuse of notation is to make the proof easier to read. We will also write $\Hom_{\Lambda}(-,-)$ as $(-,-)$, to save space. 
\bigskip\\

\begin{lemma}\label{lemma:EndPi*}
Any morphism from $P_i^*$ to itself which is not an isomorphism will factor through $P_i^*\xrightarrow{[\alpha^*]_\alpha}\bigoplus_\alpha P_{t(\alpha)}$.
\end{lemma}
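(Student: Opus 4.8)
The plan is to use the triangle $P_i^*\xrightarrow{[\alpha^*]_\alpha}\bigoplus_\alpha P_{t(\alpha)}\xrightarrow{[\alpha]_\alpha}P_i\to P_i^*[1]$ together with the defining property of a right approximation. First I would apply the homological functor $\Hom_\Lambda(P_i^*,-)$ to this triangle, obtaining an exact sequence
\begin{equation*}
\Hom_\Lambda(P_i^*,P_i[-1])\to\Hom_\Lambda(P_i^*,P_i^*)\xrightarrow{[\alpha^*]_\alpha\circ-}\Hom_\Lambda\Bigl(P_i^*,\bigoplus_\alpha P_{t(\alpha)}\Bigr)\to\Hom_\Lambda(P_i^*,P_i).
\end{equation*}
So a morphism $f\colon P_i^*\to P_i^*$ factors through $[\alpha^*]_\alpha$ precisely when the composite $[\alpha^*]_\alpha\circ f\colon P_i^*\to\bigoplus_\alpha P_{t(\alpha)}$ is itself killed by postcomposition with $[\alpha]_\alpha$, i.e. lies in the image of $\Hom_\Lambda(P_i^*,P_i[-1])$. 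Actually a cleaner route avoiding the shift term: apply $\Hom_\Lambda(-,P_i^*)$ and $\Hom_\Lambda(-,\bigoplus P_{t(\alpha)})$ is not what I want either; instead I would argue directly that $[\alpha^*]_\alpha$ has the relevant lifting property.

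The key observation is that $[\alpha^*]_\alpha\colon P_i^*\to\bigoplus_\alpha P_{t(\alpha)}$ is a \emph{minimal left $\mathrm{add}(\Lambda)$-approximation} — equivalently, it is the map appearing in a triangle whose third term $P_i$ is such that $[\alpha]_\alpha$ is a (minimal) right $\Lambda/P_i$-approximation by \cref{lemma:minrightapprox}. Concretely: given $f\colon P_i^*\to P_i^*$, consider $g:=[\alpha^*]_\alpha\circ f\colon P_i^*\to\bigoplus_\alpha P_{t(\alpha)}$. Postcomposing with $[\alpha]_\alpha$ gives $[\alpha^*]_\alpha\circ f\circ[\alpha]_\alpha$... wait, the directions don't compose; rather I should postcompose $g$ with $[\alpha]_\alpha$ to get $g\circ[\alpha]_\alpha$, which equals $[\alpha^*]_\alpha\circ f\circ\bigl([\alpha^*]_\alpha\circ[\alpha]_\alpha\bigr)$ — no. Let me restate: the composite $P_i^*\xrightarrow{[\alpha^*]_\alpha}\bigoplus P_{t(\alpha)}\xrightarrow{[\alpha]_\alpha}P_i$ is zero (consecutive maps in a triangle), so $g$ followed by $[\alpha]_\alpha$ is $[\alpha^*]_\alpha\circ f$ followed by $[\alpha]_\alpha$, which is $f$ composed with the zero map, hence zero. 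Therefore $g\colon P_i^*\to\bigoplus P_{t(\alpha)}$ becomes zero after postcomposing with $[\alpha]_\alpha$, so by exactness of the long exact sequence $g$ lifts through $P_i[-1]\to\bigoplus P_{t(\alpha)}$...

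At this point the real content is: I need $f$ to factor as $f=h\circ[\alpha^*]_\alpha$ for some $h\colon\bigoplus P_{t(\alpha)}\to P_i^*$, under the hypothesis that $f$ is not an isomorphism. The cleanest argument I expect: since $f$ is not an isomorphism and $\End_\Lambda(P_i^*)$ is local (as $P_i^*$ is indecomposable — this should follow because $P_i$ is indecomposable and the approximation is minimal, making $P_i^*$ indecomposable in the derived category), $f$ lies in the radical of $\End_\Lambda(P_i^*)$. Then I would use the triangle to show the radical of $\End_\Lambda(P_i^*)$ consists exactly of the maps factoring through $[\alpha^*]_\alpha$: indeed $\Hom_\Lambda(P_i,\bigoplus P_{t(\alpha)})\xrightarrow{-\circ[\alpha^*]_\alpha}\Hom_\Lambda(P_i^*,\bigoplus P_{t(\alpha)})$... no, the direction should be $\Hom_\Lambda(\bigoplus P_{t(\alpha)},P_i^*)\xrightarrow{-\circ[\alpha^*]_\alpha}\Hom_\Lambda(P_i^*,P_i^*)$, and its image is the set of maps factoring through $[\alpha^*]_\alpha$. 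The cokernel of this map injects (by the long exact sequence from applying $\Hom_\Lambda(-,P_i^*)$ to the triangle) into $\Hom_\Lambda(P_i^*[1],P_i^*)=\Hom_\Lambda(P_i^*,P_i^*[-1])$, and I would need this to vanish or at least to not obstruct the identity — equivalently, the image of $-\circ[\alpha^*]_\alpha$ together with the isomorphisms exhausts $\End_\Lambda(P_i^*)$.

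The main obstacle I anticipate is verifying that the maps factoring through $[\alpha^*]_\alpha$ form precisely the non-iso part of $\End_\Lambda(P_i^*)$, which amounts to showing $\mathrm{id}_{P_i^*}$ does \emph{not} factor through $[\alpha^*]_\alpha$ (clear, since otherwise $P_i^*$ would be a summand of a complex concentrated in degree $0$, contradicting that it has cohomology related to $P_i$ in a nontrivial degree) and that everything in the radical does. For the latter I would invoke that $[\alpha^*]_\alpha$, being the cocone map of a \emph{minimal} right approximation, is left minimal, and use a standard argument: any $f$ not factoring through $[\alpha^*]_\alpha$ would, via the long exact sequence, force a nonzero map $P_i^*[1]\to P_i^*$, i.e. a nonzero element of $\Hom_\Lambda(P_i^*[1],P_i^*)$; I would then argue such maps cannot lift the identity, so $f-(\text{something factoring through }[\alpha^*]_\alpha)$ is invertible, forcing $f$ itself to be invertible when $f$ is not in the radical. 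Chasing the long exact sequence carefully, keeping track of variances, is the fiddly part; but conceptually it is just the statement that $\bigoplus_\alpha P_{t(\alpha)}\to P_i$ being a minimal right $\mathrm{add}(\Lambda/P_i)$-approximation makes $P_i^*\to\bigoplus_\alpha P_{t(\alpha)}$ the minimal "$\mathrm{add}(\Lambda)$-cover" of $P_i^*$, through which all non-isomorphisms factor.
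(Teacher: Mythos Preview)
Your proposal has a genuine gap. When you apply $\Hom_\Lambda(-,P_i^*)$ to the triangle $P_i^*\to\bigoplus_\alpha P_{t(\alpha)}\to P_i\to P_i^*[1]$, the long exact sequence reads
\[
\cdots\to\Hom(P_i,P_i^*)\to\Hom\Bigl(\bigoplus_\alpha P_{t(\alpha)},P_i^*\Bigr)\xrightarrow{-\circ[\alpha^*]_\alpha}\Hom(P_i^*,P_i^*)\to\Hom(P_i[-1],P_i^*)\to\cdots,
\]
so the obstruction to factoring an endomorphism through $[\alpha^*]_\alpha$ lives in $\Hom(P_i[-1],P_i^*)\cong\Hom(P_i,P_i^*[1])$, \emph{not} in $\Hom(P_i^*[1],P_i^*)$ as you wrote. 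This matters: $\Hom(P_i,P_i^*[1])$ is essentially never zero, because the connecting map $P_i\to P_i^*[1]$ itself (coming from the identity on $P_i$) is nonzero whenever the approximation is non-split. So the long exact sequence by itself cannot tell you that non-isomorphisms factor, and your ``standard argument'' does not go through as stated.

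The paper's proof supplies exactly the extra input you are missing, and it uses both standing hypotheses, neither of which appears in your sketch. One first extends the given non-isomorphism $\gamma$ to a morphism of triangles, obtaining an induced $\beta\colon P_i\to P_i$; minimality of the approximation together with the five-lemma forces $\beta$ to be a non-isomorphism. Now the hypothesis that there are no loops at $i$ is used: it guarantees that any non-iso $\beta\in\End(P_i)$ factors through $\bigoplus_\alpha P_{t(\alpha)}$. A diagram chase then writes $\gamma=[\alpha^*]_\alpha\cdot[d'_\alpha]_\alpha+d''\cdot c$ for some $d''\in\Hom(P_i^*,P_i[-1])$, and the second hypothesis $\Hom(P_i^*[1],\Lambda)=0$ kills $d''$. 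Your approach never invokes either hypothesis, which is a sign that the argument cannot be complete: both are genuinely needed here, not just for other parts of the paper.
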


\begin{proof}

Assume that $\gamma$ is a morphism from $P_i^*$ to itself which is not an isomorphism. We will now show that this can be completed to a morphism of distinguished triangles from the mutation triangle to itself. Consider the solid part of the following diagram.
\begin{equation*}
\begin{tikzcd}
P_i[-1] \arrow[r, "c"] \arrow[d, dashed, "{\beta[-1]}"] & P_i^* \arrow[r, "{[\alpha^*]_{\alpha}}"] \arrow[d, "{\gamma}"] & \bigoplus\limits_{\alpha:i\rightarrow ?}P_{t(\alpha)}  \arrow[r, "{[\alpha]_{\alpha}}"] \arrow[d, dotted, "{\delta}"] & P_i \arrow[d, dashed, "{\beta}"] \\
P_i[-1] \arrow[r, "c"] & P_i^* \arrow[r, "{[\alpha^*]_{\alpha}}"]  & \bigoplus\limits_{\alpha:i\rightarrow ?} P_{t(\alpha)}  \arrow[r, "{[\alpha]_{\alpha}}"] & P_i
\end{tikzcd}
\end{equation*}
The composition $c \gamma\cdot[\alpha^*]_{\alpha}$ is a morphism between projective objects in different shifted degrees, and is therefore zero. Since $P_i[-1]$ is a weak kernel of $[\alpha^*]_\alpha$, this implies that $c \gamma $ factors through $P_i[-1]$, and thus there exists a morphism $\beta[-1]$ making the square commute. The $2$-out-of-$3$ property for triangulated categories now ensures the existence of $\delta$, completing the morphism of triangles.  

\bigskip

We will now show that $\beta$ can't be an isomorphism. If $\beta$ is an isomorphism then it has an inverse $\beta^{-1}\colon P_i\rightarrow P_i$.
The morphism $\bigoplus_{\alpha}P_{t(\alpha)} \xrightarrow{[\alpha]_{\alpha}} P_i$ is a right $\Lambda/P_i$-approximation, so the composition $[\alpha]_\alpha\cdot\beta^{-1}$ factors through $\bigoplus_{\alpha}P_{t(\alpha)}$. Thus there exists a morphism $\delta'$ making the following diagram commute.
\begin{equation*}
\begin{tikzcd}
\bigoplus\limits_{\alpha:i\rightarrow ?}P_{t(\alpha)}  \arrow[r, "{[\alpha]_{\alpha}}"] \arrow[d, "{\delta}"] & P_i \arrow[d, "{\beta}"] \\
\bigoplus\limits_{\alpha:i\rightarrow ?}P_{t(\alpha)}  \arrow[r, "{[\alpha]_{\alpha}}"] \arrow[d, dashed, "{\delta'}"] & P_i \arrow[d, "{\beta^{-1}}"] \\
\bigoplus\limits_{\alpha:i\rightarrow ?} P_{t(\alpha)}  \arrow[r, "{[\alpha]_{\alpha}}"] & P_i
\end{tikzcd}
\end{equation*}
We now have that $\delta\delta'\cdot[\alpha]_\alpha=[\alpha]_\alpha\cdot\beta^{-1}\beta=[\alpha]_\alpha$, and since $[\alpha]_\alpha$ is right minimal, this implies that $\delta\delta'$ is an isomorphism. 
This means that $\delta$ has a right inverse, explicitly given by $\left[\delta'(\delta\delta')^{-1}\right]$. 
A similar argument shows that $\delta$ has a left inverse, and hence it is an isomorphism. In other words, $\beta$ being an isomorphism implies $\delta$ is an isomorphism. But then, by the five-lemma, $\gamma$ would also be an isomorphism, which contradicts our assumption. So $\beta$ can't be an isomorphism.

\bigskip

This means that $\beta\colon i\rightarrow i$ is a linear combination of paths whose length is bigger than $1$, so it factors through $\bigoplus_{\alpha}P_{t(\alpha)} $, since the direct sum is taken over all arrows $\alpha$ that start in $i$ (and we assume the quiver has no cycles of length one at the vertex i). Using this, we will now show that $\gamma$ factors through $[\alpha^*]_{\alpha}$.
For each $\alpha$, let $d_\alpha$ be a path $t(\alpha)\rightarrow i$ such that $\beta=\sum_\alpha \alpha d_\alpha$.  This gives a morphism $[d_\alpha]_\alpha\colon P_i\rightarrow \bigoplus_\alpha P_{t(\alpha)}$ satisfying $\beta=[\alpha]_{\alpha}\cdot [d_\alpha]_\alpha$. Inserting this into the diagram from before we get the following, where the lower right triangle commutes, in addition to the squares.

\begin{equation*}
\begin{tikzcd}
P_i[-1] \arrow[r, "c"] \arrow[d, "{\beta[-1]}"] & P_i^* \arrow[r, "{[\alpha^*]_{\alpha}}"] \arrow[d, "{\gamma}"] & \bigoplus\limits_{\alpha:i\rightarrow ?}P_{t(\alpha)}  \arrow[r, "{[\alpha]_{\alpha}}"] \arrow[d, "{\delta}"] \arrow[dr, phantom, "\circlearrowleft", near end ] & P_i \arrow[d, "{\beta}"] \arrow[dl, pos=0.4, "{[d_\alpha]_\alpha}"] \\
P_i[-1] \arrow[r, "c"'] & P_i^* \arrow[r, "{[\alpha^*]_{\alpha}}"']  & \bigoplus\limits_{\alpha:i\rightarrow ?} P_{t(\alpha)}  \arrow[r, "{[\alpha]_{\alpha}}"'] & P_i
\end{tikzcd}
\end{equation*}

Commutativity of the rightmost square and triangle gives that 
$$\delta\cdot[\alpha]_{\alpha}=[\alpha]_{\alpha}\cdot\beta=[\alpha]_{\alpha}\cdot [d_\alpha]_\alpha\cdot[\alpha]_{\alpha},$$ 
and by rearranging this, we get $\left( \delta - [\alpha]_\alpha\cdot [d_\alpha]_\alpha \right)\cdot [\alpha]_\alpha=0$. Thus, since $P_i^*$ is a weak kernel of $[\alpha]_{\alpha}$ there exists a morphism $[d'_\alpha]_\alpha\colon \bigoplus_{\alpha}P_{t(\alpha)} \rightarrow P_i^*$ such that $[d'_\alpha]_\alpha\cdot [\alpha^*]_{\alpha}= \delta - [\alpha]_\alpha\cdot [d_\alpha]_\alpha$. We insert $[d'_\alpha]_\alpha$ into the diagram

\begin{equation*}
\begin{tikzcd}
P_i[-1] \arrow[r, "c"] \arrow[d, "{\beta[-1]}"] & P_i^* \arrow[r, "{[\alpha^*]_{\alpha}}"] \arrow[d, "{\gamma}"]  & \bigoplus\limits_{\alpha:i\rightarrow ?}P_{t(\alpha)}  \arrow[r, "{[\alpha]_{\alpha}}"] \arrow[d, "{\delta}"] \arrow[dl, dashed, "{[d'_\alpha]_\alpha}"] \arrow[dr, phantom, "\circlearrowleft", near end ] & P_i \arrow[d, "{\beta}"] \arrow[dl, pos=0.4, "{[d_\alpha]_\alpha}"] \\
P_i[-1] \arrow[r, "c"'] & P_i^* \arrow[r, "{[\alpha^*]_{\alpha}}"']  & \bigoplus\limits_{\alpha:i\rightarrow ?} P_{t(\alpha)}  \arrow[r, "{[\alpha]_{\alpha}}"'] & P_i
\end{tikzcd}
\end{equation*}

Observe that we can rewrite the last equation as $\delta=[d'_\alpha]_\alpha\cdot[\alpha^*]_{\alpha} + [\alpha]_\alpha\cdot[d_\alpha]_\alpha$, so by the commutativity of the middle square we get 

\begin{align*}
\gamma\cdot[\alpha^*]_{\alpha} &= [\alpha^*]_{\alpha}\cdot\gamma \\
&=  [\alpha^*]_{\alpha}\cdot\left([d'_\alpha]_\alpha\cdot[\alpha^*]_{\alpha} +  [\alpha]_{\alpha}\cdot[d_\alpha]_\alpha\right) \\
&= [\alpha^*]_{\alpha}\cdot [d'_\alpha]_\alpha\cdot[\alpha^*]_{\alpha} + \underbrace{[\alpha^*]_{\alpha}\cdot[\alpha]_{\alpha}}_{0}\cdot[d_\alpha]_\alpha = [\alpha^*]_{\alpha}\cdot [d'_\alpha]_\alpha\cdot[\alpha^*]_{\alpha}
\end{align*}

This means that $\left( \gamma - [\alpha^*]_{\alpha}\cdot[d'_\alpha]_\alpha\right)\cdot[\alpha^*]_{\alpha}=0$, and since $P_i[-1]$ is a weak kernel of $[\alpha^*]_{\alpha}$ there exists a morphism $d''\colon P_i^*\rightarrow P_i[-1]$ such that $d'' c=\gamma - [\alpha^*]_{\alpha}\cdot [d'_\alpha]_\alpha$. Inserting $d''$ into the diagram, we end up with

\begin{equation*}
\begin{tikzcd}
P_i[-1] \arrow[r, "c"] \arrow[d, "{\beta[-1]}"] & P_i^* \arrow[r, "{[\alpha^*]_{\alpha}}"] \arrow[d, "{\gamma}"] \arrow[dl, dotted, "{d''}"] & \bigoplus\limits_{\alpha:i\rightarrow ?}P_{t(\alpha)}  \arrow[r, "{[\alpha]_{\alpha}}"] \arrow[d, "{\delta}"] \arrow[dl, dashed, "{[d'_\alpha]_\alpha}"] \arrow[dr, phantom, "\circlearrowleft", near end ] & P_i \arrow[d, "{\beta}"] \arrow[dl, pos=0.4, "{[d_\alpha]_\alpha}"] \\
P_i[-1] \arrow[r, "c"'] & P_i^* \arrow[r, "{[\alpha^*]_{\alpha}}"']  & \bigoplus\limits_{\alpha:i\rightarrow ?} P_{t(\alpha)}  \arrow[r, "{[\alpha]_{\alpha}}"'] & P_i
\end{tikzcd}
\end{equation*}

Again we can rewrite the last equation into $\gamma=d'' c + [\alpha^*]_{\alpha}\cdot [d'_\alpha]_\alpha$. Thus, if we can show that $d''=0$, we see that $\gamma$ factors through $\bigoplus_{\alpha}P_{t(\alpha)}$. Note that $d''$ is an element of $(P_i^*, P_i[-1])\simeq(P_i^*[1], P_i)$, which is zero by the assumption that $(P_i^*[1], \Lambda)=0$.

 Thus, we have that $d''=0$, which means that $\gamma=[\alpha^*]_{\alpha}\cdot [d'_\alpha]_\alpha$, so $\gamma$ factors through $\bigoplus_\alpha P_{t(\alpha)}$. This is true for any non-isomorphism $P_i^*\rightarrow P_i^*$.
 
 \end{proof}

\begin{lemma}\label{lemma:HomPiLambda}
As a right $\End_\Lambda(\Lambda \oplus P_i^*)$-module, $(P_i^*, \Lambda \oplus P_i^*)$ fits in the following exact sequence
\begin{equation*}
\begin{tikzcd}[column sep = small]
(P_i,\Lambda \oplus P_i^*) \arrow[r, "{[\alpha]_{\alpha}}"] & \bigoplus\limits_{\alpha:i\rightarrow ?}(P_{t(\alpha)},\Lambda \oplus P_i^*) \arrow[r, "{[\alpha^*]_{\alpha}}"] & (P_i^*,\Lambda \oplus P_i^*) \arrow[r] & \frac{(P_i^*,\Lambda\oplus P_i^*)}{\Rad((P_i^*,\Lambda\oplus P_i^*))} \arrow[r] & 0,
\end{tikzcd}
\end{equation*}
where $\frac{(P_i^*,\Lambda\oplus P_i^*)}{\Rad((P_i^*,\Lambda\oplus P_i^*))}\simeq S_i^*$, the simple module at vertex $i^*$.
\end{lemma}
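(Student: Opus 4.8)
The plan is to produce the sequence by applying the contravariant cohomological functor $(-,\Lambda\oplus P_i^*):=\Hom_\Lambda(-,\Lambda\oplus P_i^*)$ to the mutation triangle of \cref{obs:mutationTriangle}, and then to identify the image of its last nonzero map with the radical of $(P_i^*,\Lambda\oplus P_i^*)$.

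Rotating the mutation triangle to $P_i[-1]\xrightarrow{c}P_i^*\xrightarrow{[\alpha^*]_\alpha}\bigoplus_\alpha P_{t(\alpha)}\xrightarrow{[\alpha]_\alpha}P_i$ and applying $(-,\Lambda\oplus P_i^*)$ yields a long exact sequence. Since the right $\End_\Lambda(\Lambda\oplus P_i^*)$-module structure on $(-,\Lambda\oplus P_i^*)$ is given by postcomposition, and postcomposition commutes with the precomposition maps induced by a distinguished triangle, all of its maps are maps of right $\End_\Lambda(\Lambda\oplus P_i^*)$-modules. The relevant stretch of the sequence is
\[
(P_i,\Lambda\oplus P_i^*)\xrightarrow{[\alpha]_\alpha}\bigoplus_\alpha(P_{t(\alpha)},\Lambda\oplus P_i^*)\xrightarrow{[\alpha^*]_\alpha}(P_i^*,\Lambda\oplus P_i^*)\xrightarrow{c}(P_i[-1],\Lambda\oplus P_i^*),
\]
so exactness at $\bigoplus_\alpha(P_{t(\alpha)},\Lambda\oplus P_i^*)$ is immediate, and it remains to show that $\Img([\alpha^*]_\alpha)=\Rad((P_i^*,\Lambda\oplus P_i^*))$. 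Granting this, the cokernel is the simple top of the indecomposable projective right $\End_\Lambda(\Lambda\oplus P_i^*)$-module $(P_i^*,\Lambda\oplus P_i^*)$; this top is genuinely simple because $\End_\Lambda(P_i^*)$ is local, its non-isomorphisms forming a proper two-sided ideal by \cref{lemma:EndPi*} --- proper since the mutation triangle does not split (otherwise $P_i$ would be a direct summand of $\bigoplus_\alpha P_{t(\alpha)}$, forcing $P_i\cong P_{t(\alpha)}$ for some $\alpha$ and hence a loop at $i$). We name this simple module $S_i^*$.

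To compute the radical I would use that $P_i^*$ is indecomposable and not isomorphic to any $P_j$: the cocone complex $P_i^*$ has $H^1(P_i^*)\cong\Cok([\alpha]_\alpha)\cong S_i\neq 0$, so it is not quasi-isomorphic to a module. Thus, decomposing $(P_i^*,\Lambda\oplus P_i^*)=\bigoplus_j(P_i^*,P_j)\oplus\End_\Lambda(P_i^*)$, its radical is $\bigoplus_j(P_i^*,P_j)\oplus\Rad\End_\Lambda(P_i^*)$. For the inclusion $\Img([\alpha^*]_\alpha)\subseteq\Rad$: every element of the image factors as $P_i^*\xrightarrow{[\alpha^*]_\alpha}\bigoplus_\alpha P_{t(\alpha)}\xrightarrow{g}\Lambda\oplus P_i^*$, and its components into the various $P_j$ are radical maps automatically, while its component into $P_i^*$ is a non-isomorphism (else $[\alpha^*]_\alpha$ would be a split monomorphism, splitting the triangle). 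For the reverse inclusion I would treat the two kinds of summands separately: feeding $\Lambda$ into the long exact sequence above gives $(P_i^*,\Lambda)\xrightarrow{c}(P_i[-1],\Lambda)=\operatorname{Ext}^1_\Lambda(P_i,\Lambda)=0$ since $P_i$ is projective, so every map $P_i^*\to\Lambda$ --- hence every map $P_i^*\to P_j$ --- factors through $[\alpha^*]_\alpha$; and every non-isomorphism $P_i^*\to P_i^*$ factors through $[\alpha^*]_\alpha$ directly by \cref{lemma:EndPi*}. Combining the two inclusions gives the equality, hence exactness at $(P_i^*,\Lambda\oplus P_i^*)$ with cokernel $S_i^*$.

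The part I expect to require the most care is the radical bookkeeping: one must be certain both that $\Rad((P_i^*,\Lambda\oplus P_i^*))$ decomposes as $\bigoplus_j(P_i^*,P_j)\oplus\Rad\End_\Lambda(P_i^*)$, which rests on $P_i^*$ being indecomposable and pairwise non-isomorphic to the $P_j$, and that the two halves of the image computation together exhaust it. Note that the hypothesis $\Hom_\Lambda(P_i^*[1],\Lambda)=0$ enters only through \cref{lemma:EndPi*}; the vanishing $\operatorname{Ext}^1_\Lambda(P_i,\Lambda)=0$ is free from projectivity of $P_i$, and the no-loop condition at $i$ is what keeps the mutation triangle from splitting.
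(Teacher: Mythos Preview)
Your proof is correct and follows essentially the same route as the paper: apply $(-,\Lambda\oplus P_i^*)$ to the mutation triangle, use $(P_i[-1],\Lambda)=0$ to see that $[\alpha^*]_\alpha$ surjects onto the $\Lambda$-summand, and invoke \cref{lemma:EndPi*} for the $P_i^*$-summand. You supply more detail than the paper does---in particular the explicit verification that $P_i^*$ is indecomposable and not isomorphic to any $P_j$, and the containment $\Img([\alpha^*]_\alpha)\subseteq\Rad$ via non-splitting of the triangle---but the underlying argument is the same.
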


\begin{proof}
If we apply $(-,\Lambda \oplus P_i^*)$ to the triangle
\begin{equation*}
P_i^*\xrightarrow{[\alpha^*]_{\alpha}} \bigoplus\limits_{\alpha: i\rightarrow ?} P_{t(\alpha)} \xrightarrow{[\alpha]_{\alpha}} P_i \rightarrow P_i^*[1],
\end{equation*}
we get a long exact sequence, which contains the following part 
\begin{equation*}
\begin{tikzcd}[column sep = small]
\cdots \arrow[r] & (P_i,\Lambda \oplus P_i^*) \arrow[r, "{[\alpha]_{\alpha}}"] & \bigoplus\limits_{\alpha:i\rightarrow ?}(P_{t(\alpha)},\Lambda \oplus P_i^*) \arrow[r, "{[\alpha^*]_{\alpha}}"] & (P_i^*,\Lambda \oplus P_i^*) \arrow[dll, out = 0, in = 180, looseness=2, overlay] \\
& (P_i[-1], \Lambda \oplus P_i^*) \arrow[r] & \cdots. & 
\end{tikzcd}
\end{equation*}
We have that $(P_i[-1], \Lambda \oplus P_i^*)\simeq (P_i[-1], \Lambda)\oplus(P_i[-1], P_i^*)$, because the contravariant Hom functor preserves products. Observe now that
$(P_i[-1], \Lambda)=0$, since $P_i[-1]$ and $\Lambda$ are projective modules in different shifted degrees.

It follows that $[\alpha^*]_\alpha$ is surjective on all vertices except $i^*$. For $i^*$, \cref{lemma:EndPi*} implies that all radical endomorphisms factor through $[\alpha^*]_\alpha$, which means that the image of $[\alpha^*]_\alpha$ is $\Rad(P_i^*,\Lambda\oplus P_i^*)$. This completes the proof of the lemma.

\end{proof}

The lemma shows that any element of $(P_i^*, \Lambda\oplus P_i^*)$ is either an isomorphism from $P_i^*$ to itself, or given by some linear combiantion of elements in $\bigoplus\limits_{\alpha:i\rightarrow ?}(P_{t(\alpha)},\Lambda\oplus P_i^*)$, composed with the map $[\alpha^*]_\alpha$. 

This means that any linear combination of paths in $Q^{i^*}$ from some vertex $k$ to $i^*$ is given by some linear combination of paths $k\rightarrow t(\alpha)$ in $Q$ (for some subset of the arrows $\alpha$), each followed by the corresponding arrow $\alpha^*$. 

From the lemma we also see that $\Ker[\alpha^*]_{\alpha} = \Img[\alpha]_\alpha$, meaning that an element of $(\bigoplus\limits_{\alpha: i\rightarrow ?} P_{t(\alpha)}, \Lambda \oplus P_i^*)$ is sent to zero by $[\alpha^*]_\alpha$ if and only if it comes from an element of $(P_i, \Lambda\oplus P_i^*)$. So the composition $[\alpha^*]_\alpha\cdot[\alpha]_\alpha=\sum_{\alpha}\alpha^*\alpha$ is zero, which defines a relation $i\dashrightarrow i^*$ in $Q^{i^*}$, and any relation in $Q^{i^*}$ ending in $i^*$ factors through this relation.

We summarize these observations in the following remark.

\begin{remark}\label{remark:intoi}
There is a one-to-one correspondence between arrows out of $i$ in $Q$ and arrows into $i^*$ in $Q^{i^*}$.
There is a minimal relation $i\dashrightarrow i^*$ in $Q^{i^*}$ given by $\sum_{\alpha}\alpha^*\alpha=0$, and any relation ending in $i^*$ factors through this relation. 

\end{remark}

Before we state the next lemma, which describes the structure of $(\Lambda, P_i^*)$, we will define some morphisms which we will need in order to do so. Recall from \cref{section:setup} that if $r$ is a relation and $\alpha$ is an arrow, both starting in $i$, then $\sfrac{r}{\alpha}$ denotes the linear combination of paths in $Q$ given by taking all paths in $r$ which begin with the arrow $\alpha$, and removing $\alpha$ from them. So $\sfrac{r}{\alpha}$ is a linear combination of paths from vertex $t(\alpha)$ to vertex $t(r)$.
As usual, $\sfrac{r}{\alpha}$ will for simplicity also refer to the induced maps $P_{t(r)}\xrightarrow{\sfrac{r}{\alpha}} P_{t(\alpha)}$ and $(\Lambda, P_{t(r)})\xrightarrow{\sfrac{r}{\alpha}} (\Lambda, P_{t(\alpha)})$.

If we take the mutation triangle
\begin{equation*}
P_i^*\xrightarrow{[\alpha^*]_{\alpha}} \bigoplus\limits_{\alpha: i\rightarrow ?} P_{t(\alpha)} \xrightarrow{[\alpha]_{\alpha}} P_i \rightarrow P_i^*[1]
\end{equation*}
and apply the functor $(P_{t(r)}, - )$ to it, we get a long exact sequence containing the following part
\begin{equation*}
\begin{tikzcd}[column sep = small, row sep = tiny]
\cdots \arrow[r] & (P_{t(r)}, P_i[-1]) \arrow[r] & (P_{t(r)}, P_i^*) \arrow[r, "{[\alpha^*]_{\alpha}}"]  & \bigoplus\limits_{\alpha:i\rightarrow ?}(P_{t(r)}, P_{t(\alpha)})  \arrow[r, "{[\alpha]_{\alpha}}"] & (P_{t(r)}, P_i) \arrow[r] & \cdots, \\
 & 0 \arrow[u, equals] & & &
\end{tikzcd}
\end{equation*}
where $(P_{t(r)}, P_i[-1]) = 0$ because the modules are projective in different shifted degrees. Hence, by exactness $[\alpha^*]_\alpha$ is a monomorphism. 
Notice that for fixed $r$, $[\sfrac{r}{\alpha}]_{\alpha}$ is an element of $\bigoplus\limits_{\alpha:i\rightarrow ?}(P_{t(r)}, P_{t(\alpha)})$, and when composed with $[\alpha]_\alpha$ this gives $[\sfrac{r}{\alpha}]_\alpha\cdot[\alpha]_\alpha=\sum_\alpha \sfrac{r}{\alpha}\circ\alpha=r$. 
Since $r$ is a relation, and hence equal to zero in $\Lambda$, this means that $[\sfrac{r}{\alpha}]_\alpha\in\Ker([\alpha]_\alpha)$. And since $\Ker([\alpha]_\alpha)\simeq\Img([\alpha^*]_\alpha)$ by exactness, we can find an element of $(P_{t(r)}, P_i^*)$ which is sent to $[\sfrac{r}{\alpha}]_\alpha$ by $[\alpha^*]_\alpha$. We call this element $\overline{r}$. In other words, we have that $\overline{r}\in (P_{t(r)}, P_i^*)$, and $\overline{r}$ satisfies $\overline{r}\cdot \alpha^*=\sfrac{r}{\alpha}$ for all $\alpha$. 

We are now ready to state the lemma.

\begin{lemma}\label{lemma:homLambdaPi}
The map
$$(\Lambda, [\overline{r}]_r)\colon  \bigoplus\limits_{r:i\dashrightarrow ?}\big(\Lambda, P_{t(r)}\big) \rightarrow \big(\Lambda, P_{i}^*\big)$$
is surjective. Moreover its kernel coincides with the kernel of $\big( \Lambda, [\sfrac{r}{\alpha}]_{r,\alpha} \big)$.

\end{lemma}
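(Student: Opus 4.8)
The plan is to play the mutation triangle off against the projective resolution of $S_i$ from \cref{lemma:projres}, using that $\Hom_\Lambda(\Lambda,-)$ is exact and sends a distinguished triangle to a long exact sequence.

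First I would apply $(\Lambda,-)$ to the rotated mutation triangle $P_i[-1]\to P_i^*\xrightarrow{[\alpha^*]_\alpha}\bigoplus_{\alpha:i\rightarrow ?}P_{t(\alpha)}\xrightarrow{[\alpha]_\alpha}P_i$. Since $\Lambda$ and $P_i$ are projective modules sitting in degree $0$, we have $(\Lambda,P_i[-1])=0$, exactly as in the proof of \cref{lemma:HomPiLambda} and in the computation preceding this lemma. The resulting long exact sequence then shows that $[\alpha^*]_\alpha\colon(\Lambda,P_i^*)\to\bigoplus_\alpha(\Lambda,P_{t(\alpha)})$ is injective, with image equal to $\Ker\big([\alpha]_\alpha\colon\bigoplus_\alpha(\Lambda,P_{t(\alpha)})\to(\Lambda,P_i)\big)$. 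Note that this step uses only the vanishing of $(\Lambda,P_i[-1])$, which is automatic; the hypothesis $(P_i^*[1],\Lambda)=0$ is not needed here.

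Next I would use the defining property $\overline{r}\cdot\alpha^*=\sfrac{r}{\alpha}$ of the maps $\overline{r}$. Assembling this over all $r$ and $\alpha$ and applying the functor $(\Lambda,-)$ gives the identity $[\overline{r}]_r\cdot[\alpha^*]_\alpha=[\sfrac{r}{\alpha}]_{r,\alpha}$ as maps $\bigoplus_{r}(\Lambda,P_{t(r)})\to\bigoplus_\alpha(\Lambda,P_{t(\alpha)})$. On the other hand, exactness of the resolution in \cref{lemma:projres} at its middle term gives $\Img\big([\sfrac{r}{\alpha}]_{r,\alpha}\big)=\Ker\big([\alpha]_\alpha\big)$. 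Combining, the image of $[\overline{r}]_r$ followed by $[\alpha^*]_\alpha$ equals $\Ker([\alpha]_\alpha)=\Img([\alpha^*]_\alpha)$, and since $[\alpha^*]_\alpha$ is injective this forces $\Img([\overline{r}]_r)=(\Lambda,P_i^*)$, i.e. $[\overline{r}]_r$ is surjective. For the kernel, injectivity of $[\alpha^*]_\alpha$ shows that for any $x\in\bigoplus_r(\Lambda,P_{t(r)})$ one has $[\overline{r}]_r(x)=0$ if and only if $[\overline{r}]_r(x)\cdot[\alpha^*]_\alpha=[\sfrac{r}{\alpha}]_{r,\alpha}(x)=0$, whence $\Ker([\overline{r}]_r)=\Ker([\sfrac{r}{\alpha}]_{r,\alpha})$, as claimed.

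I do not anticipate a genuine obstacle: this is a two-sequence diagram chase. The one thing to keep straight is that the long exact sequence coming from the triangle and the projective resolution of $S_i$ share the same map $[\alpha]_\alpha$, so $\Ker([\alpha]_\alpha)$ serves as the bridge between them; once injectivity of $[\alpha^*]_\alpha$ after applying $(\Lambda,-)$ is established, both the surjectivity and the identification of the kernel fall out at once.
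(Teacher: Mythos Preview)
Your proposal is correct and follows essentially the same route as the paper: apply $(\Lambda,-)$ to the mutation triangle to get injectivity of $[\alpha^*]_\alpha$ and $\Img([\alpha^*]_\alpha)=\Ker([\alpha]_\alpha)$, then compare with the exact sequence obtained from \cref{lemma:projres}. The only cosmetic difference is that the paper packages the surjectivity step as an application of the four lemma to a commutative ladder, whereas you argue it directly from the factorisation $[\overline{r}]_r\cdot[\alpha^*]_\alpha=[\sfrac{r}{\alpha}]_{r,\alpha}$ together with injectivity of $[\alpha^*]_\alpha$; these are the same argument.
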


\begin{proof}
We again consider the mutation triangle
\begin{equation*}\label{eq:mutationTriangle}
P_i^*\xrightarrow{[\alpha^*]_{\alpha}} \bigoplus\limits_{\alpha: i\rightarrow ?} P_{t(\alpha)} \xrightarrow{[\alpha]_{\alpha}} P_i \rightarrow P_i^*[1],
\end{equation*}
and we apply the functor $(\Lambda, -)$. Like before, this gives a long exact sequence, containing the following part
\begin{equation*}
\begin{tikzcd}[column sep = small]
\cdots \arrow[r] & (\Lambda, P_i[-1]) \arrow[r] & (\Lambda, P_i^*) \arrow[r, "{[\alpha^*]_{\alpha}}"]  & \bigoplus\limits_{\alpha:i\rightarrow ?}(\Lambda, P_{t(\alpha)})  \arrow[r, "{[\alpha]_{\alpha}}"] & (\Lambda, P_i) \arrow[r] & \cdots,
\end{tikzcd}
\end{equation*}
where $(\Lambda, P_i[-1])=0$ because the modules are projective and in different shifted degrees. So we get that the following sequence is exact
\begin{equation*}
\begin{tikzcd}
0 \arrow[r] & (\Lambda, P_i^*) \arrow[r, "{[\alpha^*]_{\alpha}}"]  & \bigoplus\limits_{\alpha:i\rightarrow ?}(\Lambda, P_{t(\alpha)})  \arrow[r, "{[\alpha]_{\alpha}}"] & (\Lambda, P_i).
\end{tikzcd}
\end{equation*}
By applying $(\Lambda, - )$ to the projective resolution given in \cref{lemma:projres}, we also get that the sequence
\begin{equation*}
\begin{tikzcd}
\Ker([\sfrac{r}{\alpha}]_{r,\alpha}) \arrow[r] & \bigoplus\limits_{r:i\dashrightarrow ?}(\Lambda, P_{t(r)}) \arrow[r, "{[\sfrac{r}{\alpha}]_{r, \alpha}}"]  & \bigoplus\limits_{\alpha:i\rightarrow ?}(\Lambda, P_{t(\alpha)})  \arrow[r, "{[\alpha]_{\alpha}}"] & (\Lambda, P_i)
\end{tikzcd}
\end{equation*}
is exact.

Combining these two exact sequences, we get the following diagram
\begin{equation*}
\begin{tikzcd}
0 \arrow[r] & (\Lambda, P_i^*) \arrow[r, "{[\alpha^*]_{\alpha}}"]  & \bigoplus\limits_{\alpha:i\rightarrow ?}(\Lambda, P_{t(\alpha)})  \arrow[r, "{[\alpha]_{\alpha}}"] & (\Lambda, P_i)\\
\Ker([\sfrac{r}{\alpha}]_{r,\alpha}) \arrow[r] \arrow[u, "0"] & \bigoplus\limits_{r:i\dashrightarrow ?} (\Lambda, P_{t(r)}) \arrow[r, "{[\sfrac{r}{\alpha}]_{r,\alpha}}"] \arrow[u, "{[\overline{r}]_r}"] & \bigoplus\limits_{r:i\dashrightarrow ?} (\Lambda, P_{t(\alpha)}) \arrow[u, equals] \arrow[r, "{[\alpha]_\alpha}"]& (\Lambda, P_i). \arrow[u, equals]
\end{tikzcd}
\end{equation*}
Here, the rows are exact and the squares commute. The first vertical map is zero, and hence an epimorphism, and the equalities are both monomorphisms and epimorphisms. Thus, by the four lemma, the map $[\overline{r}]_r$ is an epimorphism, and hence surjective.

Now, let's consider the relations in $(\Lambda, P_i^*)$, that is, elements in $(\Lambda, P_i^*)$ which are equivalent to zero.
Because $[\alpha^*]_\alpha$ is a monomorphism, an element of $(\Lambda, P_i^*)$ is equivalent to zero if and only if it is in $\Ker([\alpha^*]_\alpha)$.
Any element of $(\Lambda,P_i^*)$ is equal to some element of $\bigoplus\limits_{r:i\dashrightarrow ?}\big(\Lambda, P_{t(r)}\big)$ composed with $[\overline{r}]_{r}$, since $[\overline{r}]_r$ is a surjection.
In particular, this means that any element of $\Ker([\alpha^*]_\alpha)$ is generated by some element of $\bigoplus\limits_{r:i\dashrightarrow ?}\big(\Lambda, P_{t(r)}\big)$, which by commutativity must be sent to zero by $[\sfrac{r}{\alpha}]_{r,\alpha}$. Hence, $\Ker([\sfrac{r}{\alpha}]_{r,\alpha})$ generates $\Ker([\alpha^*]_\alpha)$, and consequently it generates all relations in $(\Lambda, P_i^*)$.
\end{proof}

This lemma shows that any morphism $P_k\rightarrow P_i^*$ can be written as some morphsim  $P_k \rightarrow \bigoplus\limits_{r:i\dashrightarrow ?} P_{t(r)}$ composed with $[\overline{r}]_{r}$. In terms of the quivers, this means that any path $i^*\rightarrow k$ is given as $\gamma\overline{r}$, where $\gamma$ is some path $t(r)\rightarrow k$ and $\overline{r}$ is the arrow $i^*\rightarrow t(r)$ satisfying $\overline{r}\alpha^*=\sfrac{r}{\alpha}$ for each arrow $\alpha\colon i\rightarrow ?$.

The lemma also shows that for a vertex $l$ in $Q$, any relation $P_l\dashrightarrow P_i^*$ is of the form $[\varepsilon_r]_r\cdot[\overline{r}]_r = 0$, for certain $\varepsilon_r\colon P_l\rightarrow P_{t(r)}$. 

Since $[\alpha^*]_\alpha$ is a monomorphism, we get that $[\varepsilon_r]_r\cdot[\overline{r}]_r = 0$ if and only if $[\varepsilon_r]_r\cdot[\overline{r}]_r\cdot[\alpha^*]_\alpha = 0$. This can be rewritten as
$$[\varepsilon_r]_r\cdot[\overline{r}]_r\cdot[\alpha^*]_\alpha = \left(\sum\limits_{r\in R} \varepsilon_r \overline{r} \right)\cdot [\alpha^*]_\alpha = \left[\sum\limits_{r\in R} \varepsilon_r \overline{r}\alpha^* \right]_\alpha = \left[\sum\limits_{r\in R} \varepsilon_r \sfrac{r}{\alpha} \right]_\alpha =0,$$
which means that $[\varepsilon_r]_r\cdot[\overline{r}]_r = 0$ in $(\Lambda, P_i^*)$ if and only if $\sum_r \varepsilon_r\sfrac{r}{\alpha}=0$ for each $\alpha$ starting in $i$. Thus, $[\varepsilon_r]_r\cdot[\overline{r}]_r$ defines a relation in $(\Lambda, P_i^*)$ if and only if $[\varepsilon_r]_r\cdot[\sfrac{r}{\alpha}]_r$ defines a relation in $(\Lambda, P_{t(\alpha)})$ for each $\alpha$. 

In terms of the quivers, this means $\sum\limits_{r\in R} \varepsilon_r \overline{r}=0$ will define a relation in $Q^{i^*}$ if and only if $\sum\limits_{r\in R} \varepsilon_r \sfrac{r}{\alpha} =0$ defines a relation in $Q$ for each $\alpha\colon i\rightarrow ?$.

\medskip

\begin{remark}\label{remark:outofi}
There is a one-to-one correspondence between arrows in $Q^{i^*}$ starting in $i^*$ and minimal relations in $Q$ starting in $i$, given by $\overline{r}\leftrightarrow r$. 
This means that there are no minimal relations in $Q^{i^*}$ from $i$ to itself. Because if $r$ is any relation in $Q$ starting in $i$, then 
$$r=\sum\limits_{\alpha:i\rightarrow ?} \sfrac{r}{\alpha} \alpha=\sum\limits_{\alpha:i\rightarrow ?}\overline{r}\alpha^*\alpha=\overline{r}\sum\limits_{\alpha:i\rightarrow ?} \alpha^*\alpha,$$
so $r$ factors through $\sum_\alpha \alpha^* \alpha$ in $Q^{i^*}$, and hence is not minimal.

Also, any linear combination of paths in $Q^{i^*}$ defines a relation $i^*\dashrightarrow l$ if and only if precomposing it with each of the arrows $\alpha^*$ gives a linear combination of paths which is equal to a relation in $Q$. 
\end{remark}

\bigskip

We are now ready to define the quiver $Q^{i^*}$, whose path algebra (by construction) is isomorphic to $\End_\Lambda(\Lambda\oplus P_i^*)^{\text{op}}$.

\bigskip

\begin{lemma}\label{lemma:Qi*}
Let $(Q^{i^*}, I^{i^*})$ be the following quiver with relations:
\begin{enumerate}\addtocounter{enumi}{-1}
\item The set of vertices in $Q^{i^*}$ consists of all vertices in $Q$, and an additional vertex $i^*$.
\item All arrows in $Q$ are also arrows in $Q^{i^*}$, and all relations in $Q$ are also relations in $Q^{i^*}$.
\item There is an arrow $\alpha^*\colon t(\alpha)\rightarrow i^*$ in $Q^{i^*}$ for each arrow $\alpha\colon i\rightarrow t(\alpha)$ in $Q$, and all arrows ending in $i^*$ are given as such.
\item There is an arrow $\overline{r}\colon i^*\rightarrow t(r)$ for each minimal relation $r\colon i \dashrightarrow t(r)$ in $Q$, and all arrows starting in $i^*$ are given as such.
\item There is a relation $i\dashrightarrow i^*$ in $Q^{i^*}$, given by $\sum\limits_{\alpha:i\rightarrow ?}\alpha^*\alpha=0$, and no other minimal relations end in $i^*$.
\item For each arrow $\alpha$ starting in $i$, and each relation $r$ starting in $i$, there is a relation $t(\alpha)\dashrightarrow t(r)$ in $Q^{i^*}$, given by $\overline{r}\alpha^*=\sfrac{r}{\alpha}$.
\item There is a relation defined by $\sum\limits_{r:i\dashrightarrow ?} \varepsilon_r \overline{r}=0$ in $Q^{i^*}$ if and only $\sum\limits_{r:i\dashrightarrow ?} \varepsilon_r \sfrac{r}{\alpha}=0$ defines a relation in $Q$ for each arrow $\alpha\colon i\rightarrow ?$.
\end{enumerate}
Then the path algebra $kQ^{i^*}/I^{i^*}$ is isomorphic to $\End_\Lambda(\Lambda\oplus P_i^*)^{\text{op}}$.
\end{lemma}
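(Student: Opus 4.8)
The plan is to build an explicit algebra homomorphism
$$\Phi\colon kQ^{i^*}\longrightarrow\End_\Lambda(\Lambda\oplus P_i^*)^{\mathrm{op}},$$
show that it annihilates $I^{i^*}$ and is surjective, and finally that its kernel is exactly the ideal generated by $I^{i^*}$. On idempotents, $\Phi$ sends a vertex $j\in Q_0$ to $\mathrm{id}_{P_j}$ and $i^*$ to $\mathrm{id}_{P_i^*}$. On arrows: an arrow of $Q$ goes to the corresponding morphism between the $P_j$ (via the known isomorphism $\End_\Lambda(\Lambda)\simeq\Lambda$); the arrow $\alpha^*\colon t(\alpha)\to i^*$ goes to the $\alpha$-component $P_i^*\to P_{t(\alpha)}$ of the map $[\alpha^*]_\alpha$ in the mutation triangle of \cref{obs:mutationTriangle}; and the arrow $\overline r\colon i^*\to t(r)$ goes to the morphism $\overline r\colon P_{t(r)}\to P_i^*$ constructed just before \cref{lemma:homLambdaPi}, characterised by $\overline r\cdot\alpha^*=\sfrac{r}{\alpha}$ for every $\alpha\colon i\to?$. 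Since $kQ^{i^*}$ is free on paths and each assignment respects sources and targets, $\Phi$ extends uniquely; the ``op'' is present only to reconcile the right-to-left composition of paths with the left-to-right composition of module morphisms.

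First I would check that $\Phi$ kills every relation in $I^{i^*}$, so that it descends to $kQ^{i^*}/I^{i^*}$. The relations inherited from $Q$ (item 1) go to zero because they already do in $\Lambda$. The relation $\sum_\alpha\alpha^*\alpha=0$ (item 4) maps to the composite $[\alpha^*]_\alpha\cdot[\alpha]_\alpha$, which vanishes as two consecutive arrows of a distinguished triangle. The relations $\overline r\alpha^*=\sfrac{r}{\alpha}$ (item 5) hold by the very definition of $\overline r$. The relations of item 6 map to zero by the computation carried out immediately before \cref{remark:outofi}, using that $[\alpha^*]_\alpha$ is a monomorphism. So $\Phi$ factors through a homomorphism $kQ^{i^*}/I^{i^*}\to\End_\Lambda(\Lambda\oplus P_i^*)^{\mathrm{op}}$.

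Surjectivity I would read off the block decomposition of $\End_\Lambda(\Lambda\oplus P_i^*)$ into $\End_\Lambda(\Lambda)$, $(\Lambda,P_i^*)$, $(P_i^*,\Lambda)$ and $\End_\Lambda(P_i^*)$. The first block is hit by paths of $Q$. For $(P_i^*,\Lambda\oplus P_i^*)$, \cref{lemma:HomPiLambda} (summarised in \cref{remark:intoi}) says every element is a $k$-combination of the maps $P_i^*\to P_{t(\alpha)}$ post-composed with morphisms already in the image, i.e.\ of $\alpha^*$ extended by paths of $Q$; dually, \cref{lemma:homLambdaPi} (and \cref{remark:outofi}) shows every element of $(\Lambda,P_i^*)$ is a combination of the $\overline r$ pre-composed with paths of $Q$; and $\End_\Lambda(P_i^*)$ is $k\cdot\mathrm{id}_{P_i^*}$ together with, by \cref{lemma:EndPi*}, maps factoring through $[\alpha^*]_\alpha$, hence again in the image. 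Thus $\Phi$ is onto.

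The hard part is injectivity, i.e.\ $\Ker\Phi\subseteq\langle I^{i^*}\rangle$, and this is where the real work sits. I would argue by reduction to a normal form: using the relations $\overline r\alpha^*=\sfrac{r}{\alpha}$, any path that ``turns around'' at $i^*$ (contains a subword $\overline r\alpha^*$) is rewritten, modulo $I^{i^*}$, as a sum of shorter paths, so that every class in $kQ^{i^*}/I^{i^*}$ is represented by a combination of paths of four shapes — lying wholly in $Q$; of the form $\gamma\overline r$ leaving $i^*$; of the form $\alpha^*p$ with $p$ in $Q$ entering $i^*$; of the form $\alpha^*\gamma\overline r$ making a single excursion through $i^*$ — together with $e_{i^*}$. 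These land in the four blocks above, and inside each block the exactness statements of \cref{lemma:HomPiLambda} and \cref{lemma:homLambdaPi} pin the kernel down precisely: among the $\alpha^*$ the only relation is $\sum_\alpha\alpha^*\alpha=0$, among the $\overline r$ the only relations are those described in item 6, and inside $Q$ the relations are those of $I$. Hence a normal-form element with zero image already lies in $\langle I^{i^*}\rangle$, which finishes the proof. The delicate point — and the step I expect to consume most of the proof — is the bookkeeping of the reduction at $i^*$: verifying that closing up under these rewrites terminates and forces no relation beyond those listed, which is exactly what the kernel descriptions in \cref{lemma:HomPiLambda} and \cref{lemma:homLambdaPi} are designed to guarantee.
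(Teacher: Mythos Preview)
Your argument is correct and rests on the same three lemmas (\cref{lemma:EndPi*}, \cref{lemma:HomPiLambda}, \cref{lemma:homLambdaPi}) as the paper, but the packaging is different. The paper does not build an explicit homomorphism $\Phi$ and then verify well-definedness, surjectivity and injectivity; instead it reads off the Gabriel quiver of $\End_\Lambda(\Lambda\oplus P_i^*)^{\mathrm{op}}$ directly: primitive idempotents give the vertices (point~0), irreducible morphisms give the arrows, and the minimal relations are extracted from the kernel statements summarised in \cref{remark:intoi} and \cref{remark:outofi}. In effect the paper invokes the standard dictionary ``basic algebra $\leftrightarrow$ quiver with relations'' and just checks that each of the seven items matches what the lemmas say about the four Hom-blocks.

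What your route buys is explicitness: the map $\Phi$ is written down, and the normal-form reduction using $\overline r\alpha^*=\sfrac{r}{\alpha}$ makes the injectivity step visible rather than hidden inside the phrase ``these are all the minimal relations''. What the paper's route buys is brevity: once one accepts that \cref{remark:intoi} and \cref{remark:outofi} describe \emph{all} arrows and \emph{all} minimal relations touching $i^*$, there is nothing left to prove, and the bookkeeping you flag as ``the hard part'' is absorbed into the exactness statements of the lemmas. Both are valid; yours is closer to a from-scratch verification, the paper's is closer to a citation of the preparatory results.
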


\begin{proof}
Clearly, the primitive idempotents in $\End_\Lambda(\Lambda\oplus P_i^*)^{\text{op}}$ are the same as those in $\End_\Lambda(\Lambda)^{\text{op}}$, plus the one in $\End_\Lambda(P_i^*)^{\text{op}}$. Hence, $Q^{i^*}$ has the same vertices as $Q$, plus the additional vertex $i^*$. This proves point $0$.

Since $\End_\Lambda(\Lambda)\subseteq \End_\Lambda(\Lambda\oplus P_i^*)$, any arrow and relation in $Q$ will still exist in $Q^{i^*}$, proving point $1$.

From \cref{remark:intoi} we know that there is a one-to-one correspondence between arrows in $Q$ starting in $i$, and arrows in $Q^{i^*}$ ending in $i^*$, which proves point $2$. The remark also tells us that there is one, and only one, minimal relation ending in $i^*$, namely $\sum\limits_{\alpha:i\rightarrow ?}\alpha^*\alpha=0$. This proves point $4$. 

\Cref{remark:outofi} tells us that $\overline{r} \leftrightarrow r$ defines a one-to-one correspondence between arrows in $Q^{i^*}$ starting in $i^*$ and relations in $Q$ starting in $i$, thus proving point $3$. 
By construction, the arrows $\overline{r}$ in $Q^{i^*}$ satisfy $\overline{r}\alpha^*=\sfrac{r}{\alpha}$ for each arrow $\alpha$ in $Q$ starting in $i$. This defines a set of relations on $Q^{i^*}$, which proves point $5$.
Also from \cref{remark:outofi}, we get that a linear combination of paths in $Q^{i^*}$ defines a relation $i^*\dashrightarrow l$ if and only if precomposing it with each of the arrows $\alpha^*$ gives a linear combination of paths which is equal to a relation in $Q$. In other words, we have that
$$\sum\limits_{r:i\dashrightarrow ?} \varepsilon_r \overline{r}=0 \text{ in } Q^{i^*} \quad \iff \quad \sum\limits_{r:i\dashrightarrow ?} \varepsilon_r \sfrac{r}{\alpha}=0 \text{ for all } \alpha\colon i\rightarrow ? \text{ in } Q,$$
which proves point $6$.

This concludes the proof of the lemma.

\end{proof}

Now we are ready to prove the main result, namely that mutating a quiver according to the mutation procedure stated in \cref{section:mutationsteps} yields a quiver whose path algebra is isomorphic to to the right tilting mutation of the path algebra of the original quiver.
More precisely, the statement is (in the notation of \cref{theorem:mainResult}) that $m_i(Q)\simeq Q_i^{{i^*}}$, where $m_i(Q)$ is the quiver obtained by mutating $Q$ at vertex $i$, and $Q_i^{i^*}$ is the quiver obtained by removing vertex $i$ from the quiver $Q^{i^*}$. 
And by combining \cref{lemma:Qi*} and \cref{lemma:QuiverRemoveVertex}, we know that the path algebra of $Q_i^{i^*}$ is isomorphic to $\End_\Lambda(\mu_i^R(\Lambda))^{\text{op}}$, so this implies that the path algebra of $m_i(Q)$ is isomorphic to the right tilting mutation of $\Lambda$ at $P_i$.

\begin{proof}[Proof of Theorem \ref{theorem:mainResult}]
We want to show that $m_i(Q)\simeq Q_i^{i^*}$. The structure of $m_i(Q)$ is defined in \cref{section:mutationsteps}, so we already know it. Hence, we must determine the structure of $Q_i^{i^*}$, and show that it is the same as the known structure of $m_i(Q)$.
In order to do so, we will apply \cref{lemma:QuiverRemoveVertex} to the quiver obtained in \cref{lemma:Qi*}. Recall that $\mu_i^R(\Lambda)\simeq\Lambda/P_i\oplus P_i^*$, so we have that 
$$\End_\Lambda(\mu_i^R(\Lambda))^{\text{op}}\simeq\End_\Lambda(\Lambda/P_i\oplus P_i^*)^{\text{op}}.$$
Starting with a quiver $Q$ whose endomorphism ring is the algebra $\Lambda$, \cref{lemma:Qi*} yields the quiver $Q^{i^*}$ whose endomorphism ring is isomorphic to $\End_\Lambda(\Lambda\oplus P_i^*)$. 
What we want to do now is apply \cref{lemma:QuiverRemoveVertex} to $Q^{i^*}$, but in order to do so we must check that the assumptions in the lemma are satisfied. The assumptions we need to check are that there are no cycles of length $1$ on vertex $i$, and that there are no minimal relations from $i$ to itself.

By assumption $i$ has no cycles of length $1$ in $Q$, and because we don't add any such cycles in \cref{lemma:Qi*}, the same is true for $Q^{i^*}$. Also, we know from \cref{remark:outofi} that $Q^{i^*}$ has no minimal relations from $i$ to itself.
Thus, the assumtions needed for using \cref{lemma:QuiverRemoveVertex} are satisfied.

Now, applying \cref{lemma:QuiverRemoveVertex} to the quiver $Q^{i^*}$ will then yield the quiver $Q_i^{i^*}$, corresponding to $\End_\Lambda(\Lambda/P_i\oplus P_i^*)$, which is precisely what we want. 
As we can see from \cref{lemma:QuiverRemoveVertex}, most parts of the quiver $Q^{i^*}$ will remain unchanged when we transform it into $Q_i^{i^*}$. It keeps all vertices except $i$, as well as all arrows and relations which neither start nor end in $i$. Basically the only difference between the quivers is that vertex $i$ is removed, that minimal paths through $i$ are replaced with arrows between the same vertices, and that relations starting or ending in $i$ are extended by one step in the necessary direction. 

Note that the assumption that there are no arrows $i\rightarrow i$ in $Q$ ensures that there are no arrows $i\rightarrow i^*$ in $Q^{i^*}$, which means that none of the arrows ending in $i^*$ will be removed in $Q_i^{i^*}$. 

Looking at \cref{lemma:Qi*}, we can go through each step and make the necessary change to $Q^{i^*}$ as implied by \cref{lemma:QuiverRemoveVertex}, ending up with the quiver $Q_i^{i^*}$ having the following structure.
\begin{enumerate}\addtocounter{enumi}{-1}
\item The set of vertices in $Q^{i^*}_i$ consists of $i^*$ and all vertices in $Q$ except $i$.
\item All arrows in $Q$ which neither start nor end in $i$ are also arrows in $Q_i^{i^*}$. The same is true for relations.
\item There is an arrow $\alpha\beta\colon h\rightarrow j$ in $Q_i^{i^*}$ for each pair of arrows $\beta\colon h \rightarrow i$ and $\alpha\colon i \rightarrow j$ in $Q$.
\item There is an arrow $\alpha^*\colon t(\alpha)\rightarrow i^*$ in $Q_i^{i^*}$ for each arrow $\alpha\colon i\rightarrow t(\alpha)$ in $Q$, and all arrows ending in $i^*$ are given as such. 
\item There is an arrow $\overline{r}\colon i^*\rightarrow t(r)$ in $Q_i^{i^*}$ for each minimal relation $r\colon i \dashrightarrow t(r)$ in $Q$, and all arrows starting in $i^*$ are given as such.
\item For each arrow $\beta\colon h\rightarrow i$ in $Q$ there is a relation $h\dashrightarrow i^*$ in $Q_i^{i^*}$, given by $\sum\limits_{\alpha:i\rightarrow ?}\alpha^*\alpha\beta=0$. 
\item For each arrow $\alpha$ in $Q$ starting in $i$, and each relation $r$ starting in $i$, there is a relation $t(\alpha)\dashrightarrow t(r)$ in $Q^{i^*}$, given by $\overline{r}\alpha^*=\sfrac{r}{\alpha}$.
\item If $r\colon h\dashrightarrow i$ is a relation in $Q$, and $B$ is the set of arrows ending in $i$ (which means that $r$ can be written as $\sum\limits_{\beta\in B} \beta \bsfrac{r}{\beta}=0$), then there is a relation $h\dashrightarrow t(\alpha)$ in $Q_i^{i^*}$ for each arrow $\alpha$, defined by $\sum\limits_{\beta\in B} \alpha\beta \bsfrac{r}{\beta}=0$.
\item There is a relation defined by $\sum\limits_{r:i\dashrightarrow ?} \varepsilon_r \overline{r}=0$ in $Q_i^{i^*}$ if and only $\sum\limits_{r:i\dashrightarrow ?} \varepsilon_r \sfrac{r}{\alpha}=0$ defines a relation in $Q$ for each arrow $\alpha\colon i\rightarrow ?$.
\end{enumerate}

As we can see, this gives precisely the same quiver as the mutation procedure we defined in section \cref{section:mutationsteps}, which means that $Q_i^{i^*}\simeq m_i(Q)$. Hence, mutating a quiver $Q$ at vertex $i$ according to the mutation procedure yields a quiver whose path algebra is isomorphic to the right tilting mutation of the path algebra of $Q$ at the indecomposable projective summand $P_i$. This concludes the proof of the main theorem.

\end{proof}

\vfill

\section{Example}\label{section:example}
Let us now consider an example to see how the mutation procedure can be used in practice. This example is inspired by Ladkani's work in \cite{Ladkani2013}, specifically the following result. Let $k$ be a field, and let $A_{n,m}$ denote the algebra obtained by taking the path algebra over $k$ of the linear quiver $A_{n}$ modulo the ideal generated by all paths of length $m$. Then the algebra $A_{r\cdot n ,r+1}$ is derived equivalent to $kA_r\otimes_k kA_n$ (which is isomorphic to the path algebra of a commutative $r\times n$-rectangle).

We will now show how we can use our mutation procedure to not only recreate this result, but also give an explicit series of derived equivalencies between the two algebras.
Take the quiver corresponding to the algebra $A_{2n,3}$, which is the line quiver $A_{2n}$ with a relation for every composition of 3 consecutive arrows. We will now show that by preforming a series of mutations, we can transform this quiver into the quiver given by a column of connected commutative squares, with relations for every com position of two consecutive arrows (this is essentially the quiver of $kA_2 \otimes_k kA_n$). Note, throughout this section we will occasionally renumber the vertices in the quivers, in order to highlight the relevant part of the quiver for each mutation.
\begin{equation*}
\begin{tikzcd}
_1 \arrow[r] \arrow[rrr, bend right, dashed] & _2 \arrow[r] \arrow[rrr, bend right, dashed] & _3 \arrow[r] \arrow[rrr, bend right, dashed] & _4 \arrow[r] \arrow[rrr, bend right, dashed] & _5 \arrow[r] \arrow[rrr, bend right, dashed] & _6 \arrow[r] \arrow[r] \arrow[rrr, bend right, dashed] & _7 \arrow[r] & _8 \arrow[r]  & \cdots
\end{tikzcd}
\end{equation*}
We begin by mutating at the first (leftmost) vertex. 
To do so, we apply the mutation procedure at vertex $1$. Since there are no arrows or relations into vertex $1$, we see that steps 1, 3 and 6 don't apply in this case. By step 2 we flip the arrow $1\rightarrow 2$ to an arrow $1^*\leftarrow 2$, and by step 4 the relation $1\dashrightarrow 4$ becomes an arrow $1^*\rightarrow 4$. Thus we get a new composition through the vertex $1^*$, and by step 5 we add a relation corresponding to that composition. this relation is what makes the commutative square. Finally, by step 7 we get a relation $1^*\dashrightarrow 5$, since there is a zero relation $2\dashrightarrow 5$ which is hit by the relation $1\dashrightarrow 4$. Thus, we end up with the following.
\begin{equation*}
\begin{tikzcd}
_2 \arrow[d] \arrow[r] \arrow[dr, dashed] & _{1^*} \arrow[d] \arrow[dr, dashed] & & & & & & &\\
_3 \arrow[r] \arrow[rrr, bend right, dashed] & _4 \arrow[r] \arrow[rrr, bend right, dashed] & _5 \arrow[r] \arrow[rrr, bend right, dashed] & _6 \arrow[r] \arrow[rrr, bend right, dashed] & _7 \arrow[r] \arrow[rrr, bend right, dashed] & _8 \arrow[r] \arrow[r] \arrow[rrr, bend right, dashed] & _9 \arrow[r] & _{10} \arrow[r] & \cdots  &
\end{tikzcd}
\end{equation*}
We proceed by mutating at vertex $3$. By mutation step $1$ we get a new arrow $2\rightarrow 4$, corresponding to the composition through $3$, and this arrow is cancelled against the commutativity relation $2\dashrightarrow 4$. Step $2$ works just as in the previous mutation, by flipping the arrow $3\rightarrow 4$, and step $3$ says we should add a relation $2\dashrightarrow 3$ corresponding to the composition of these two arrows. Steps $4, 5$ and $7$ work just as in the previous mutation, and step $6$ isn't used since there are no relations ending in vertex $3$.

Thus, by mutating the above quiver at vertex $3$ according to the mutation rules, we get the following quiver. 
\begin{equation*}
\begin{tikzcd}
_2  \arrow[r] \arrow[rrr, dashed, bend left] & _{1^*} \arrow[r] \arrow[dr, dashed] & _4 \arrow[d] \arrow[r] \arrow[dr, dashed] & _{3^*} \arrow[d] \arrow[dr, dashed] & & & & &\\
 &  & _5 \arrow[r] \arrow[rrr, bend right, dashed] & _6 \arrow[r] \arrow[rrr, bend right, dashed] & _7 \arrow[r] \arrow[rrr, bend right, dashed] & _8 \arrow[r] \arrow[r] \arrow[rrr, bend right, dashed] & _9 \arrow[r] & _{10} \arrow[r] & \cdots & 
\end{tikzcd}
\end{equation*}
As we can see, the resulting quiver is more or less equal to the one we started with, except the commutative square has ''moved'' two steps to the right. 
Next, we mutate at vertex $5$. Notice that locally, the quiver around vertex $5$ is quite similar to the quiver around vertex $3$ before the last mutation. The only meaningful difference is that there is a relation ending in $5$. This means that each step in the mutation is the same as in the previous mutation, except step $6$. Mutation step $6$ tells us that the relation $1^*\dashrightarrow 5$ is extended to a relation $1^*\dashrightarrow 6$. 
Thus, mutating the above quiver at vertex $5$ results in the following quiver.
\begin{equation*}
\begin{tikzcd}
_2  \arrow[r] \arrow[rrr, dashed, bend left] & _{1^*} \arrow[r] \arrow[rrr, bend left, dashed] & _4  \arrow[r] \arrow[rrr, bend left, dashed] & _{3^*} \arrow[r] \arrow[dr, dashed] & _6 \arrow[d] \arrow[r] \arrow[dr, dashed] & _{5^*} \arrow[d] \arrow[dr, dashed] & & &\\
 & & & & _7 \arrow[r] \arrow[rrr, bend right, dashed] & _8 \arrow[r] \arrow[r] \arrow[rrr, bend right, dashed]& _9 \arrow[r] & _{10} \arrow[r] & \cdots & 
\end{tikzcd}
\end{equation*} 
Agian, we see that the mutation yields a quiver which is similar, but where the commutative square has been moved two steps to the right.
By using the mutation procedure, it is easy to check that this pattern continues, i.e. that mutating at vertex $7$ will now move the square two more steps to the right. For the remainder of this example, we will no longer explicitly mention which step of the mutation procedure contributes what to a mutated quiver. We will simply show a quiver, state which vertex we mutate at, and then show the resulting mutated quiver.

If we take the last quiver and keep mutating at the vertex in the lower left corner of the commutative square, we can push the square as far to the right as we want.
Eventually, the commutative square is pushed all the way to the end of the quiver, and using the mutation procedure it is easy to check that nothing strange happens with the square at the right end of the quiver. 

We end up with the following quiver (recall that we are looking at $A_{2n,3}$).
\begin{equation*}
\begin{tikzcd}
\cdots \arrow[r] \arrow[rrr, bend left,  dashed]  & _{2n-6} \arrow[r] \arrow[rrr, bend left, dashed] & _{(2n-7)^*} \arrow[r] \arrow[rrr, bend left, dashed] & _{2n-4} \arrow[r] \arrow[rrr, bend left, dashed] & _{(2n-5)^*} \arrow[r] \arrow[dr, dashed] & _{2n-2} \arrow[d] \arrow[r] \arrow[dr, dashed] & _{(2n-3)^*} \arrow[d] \\
& & & & & _{2n-1} \arrow[r] & _{2n} 
\end{tikzcd}
\end{equation*}
Since the entire left part of this quiver (everything but the commutative square on the end) has the same form as the quiver we started with, we can repeat the same construction to push another commutative square along the quiver. When the new square gets close to the existing square at the right end of the quiver, it is not clear what will happen when we mutate. Mutating at a vertex doesn't affect parts of the quiver that are sufficiently far away (in terms of arrows and relations), but when the two squares are close they might interact in unexpected ways. 
So let's see what happens. 

Pushing the commutative square to the right as descirbed above, eventually the quiver will look like this (note that we have renumbered the vertices)
 \begin{equation*}
\begin{tikzcd}
\cdots \arrow[r] \arrow[rrr, bend left, dashed] & _{2n-10} \arrow[r] \arrow[dr, dashed]& _{2n-9} \arrow[r] \arrow[d] \arrow[dr, dashed] & _{2n-8} \arrow[d] \arrow[dr, dashed] & & & & \\
& & _{2n-7} \arrow[r] \arrow[rrr, bend right, dashed] & _{2n-6} \arrow[r] \arrow[rrr, bend left, dashed] & _{2n-5} \arrow[r] \arrow[rrr, bend left, dashed] & _{2n-4} \arrow[r] \arrow[dr, dashed] & _{2n-3} \arrow[d] \arrow[r] \arrow[dr, dashed] & _{2n-2} \arrow[d] \\
& & & & & & _{2n-1} \arrow[r] & _{2n} \\
& & & & & & &
\end{tikzcd}
\end{equation*}
We mutate at vertex $2n-7$ using the mutation procedure, and see that we get the following quiver
\begin{equation*}
\begin{tikzcd}
\cdots \arrow[r] \arrow[rrr, bend left, dashed] & _{2n-10} \arrow[r] \arrow[rrr, bend left, dashed] & _{2n-9} \arrow[r] \arrow[rrr, bend left, dashed] & _{2n-8} \arrow[r] \arrow[dr, dashed] & _{2n-6} \arrow[d] \arrow[r] \arrow[dr, dashed] & _{(2n-7)^*} \arrow[d] \arrow[dr, dashed] & & \\
& & & & _{2n-5} \arrow[r] \arrow[rrr, out=-40, in=150, dashed] & _{2n-4} \arrow[r] \arrow[dr, dashed] & _{2n-3} \arrow[d] \arrow[r] \arrow[dr, dashed] & _{2n-2} \arrow[d] \\
& & & & & & _{2n-1} \arrow[r] & _{2n} 
\end{tikzcd}
\end{equation*}
Finally, applying the mutation procedure at vertex $2n-5$ gives us
\begin{equation*}
\begin{tikzcd}
\cdots \arrow[r] \arrow[rrr, bend left, dashed] & _{2n-10} \arrow[r] \arrow[rrr, bend left, dashed] & _{2n-9} \arrow[r] \arrow[rrr, bend left, dashed] & _{2n-8} \arrow[r] \arrow[rrr, bend left, dashed] & _{2n-6} \arrow[r] \arrow[rrr, bend left, dashed] & _{(2n-7)^*} \arrow[r] \arrow[dr, dashed] & _{2n-4} \arrow[d] \arrow[r] \arrow[dr, dashed] \arrow[dd, bend right, dashed] & _{(2n-5)^*} \arrow[d] \arrow[dd, bend left, dashed] \\
& & & & & & _{2n-3} \arrow[d] \arrow[r] \arrow[dr, dashed] & _{2n-2} \arrow[d] \\
& & & & & & _{2n-1} \arrow[r] & _{2n} 
\end{tikzcd}
\end{equation*}
So the new commutative square slides in on top of the old one.
Again, the left part of the quiver is like when we started, so as before we can repeat this construction. Now, we only need to check the last mutation, because the process of moving the square to the right is identical to before.
\begin{equation}
\begin{tikzcd}
\cdots \arrow[r] \arrow[rrr, bend left, dashed] & _{2n-12} \arrow[r] \arrow[rrr, bend left, dashed] & _{2n-11} \arrow[r] \arrow[rrr, bend left, dashed] & _{2n-10} \arrow[r] \arrow[dr, dashed] & _{2n-9} \arrow[d] \arrow[r] \arrow[dr, dashed] & _{2n-8} \arrow[d] \arrow[dr, dashed] & & \\
& & & & _{2n-7} \arrow[r] \arrow[rrr, out=-40, in=150, dashed] & _{2n-6} \arrow[r] \arrow[dr, dashed] & _{2n-5} \arrow[d] \arrow[r] \arrow[dr, dashed] \arrow[dd, bend right, dashed] & _{2n-4} \arrow[d] \arrow[dd, bend left, dashed] \\
& & & & & & _{2n-3} \arrow[d] \arrow[r] \arrow[dr, dashed] & _{2n-2} \arrow[d] \\
& & & & & & _{2n-1} \arrow[r] & _{2n}
\end{tikzcd}
\end{equation}
\begin{equation*}
\begin{tikzcd}
\cdots \arrow[r] \arrow[rrr, bend left, dashed] & _{2n-12} \arrow[r] \arrow[rrr, bend left, dashed] & _{2n-11} \arrow[r] \arrow[rrr, bend left, dashed] & _{2n-10} \arrow[r] \arrow[rrr, bend left, dashed] & _{2n-9} \arrow[r] \arrow[rrr, bend left, dashed] & _{2n-8} \arrow[r] \arrow[dr, dashed] & _{2n-6} \arrow[d] \arrow[r] \arrow[dd, bend right, dashed] \arrow[dr, dashed] & _{(2n-7)^*} \arrow[d] \arrow[dd, bend left, dashed] \\
& & & & & & _{2n-5} \arrow[d] \arrow[r] \arrow[dr, dashed] \arrow[dd, bend right, dashed] & _{2n-4} \arrow[d] \arrow[dd, bend left, dashed] \\
& & & & & & _{2n-3} \arrow[d] \arrow[r] \arrow[dr, dashed] & _{2n-2} \arrow[d] \\
& & & & & & _{2n-1} \arrow[r] & _{2n} 
\end{tikzcd}
\end{equation*}
Observe that the mutation doesn't affect the bottom part of the column. This means that, from the perspective of a new square that has been pushed along the quiver to the position right before the column, the situation is identical to before the last mutation.
Because one mutation has a limited range of influence in the quiver, the same is true no matter how tall the "commutative column" is. Hence we can move all the vertices in the quiver into the column part, by first creating a commutative square at the beginning of the quiver, pushing it to the right until it becomes part of the column, and repeat. When we get to the beginning of the quiver, we can see that it all wraps up nicely:
\begin{multicols}{2}
\begin{equation*}
\begin{tikzcd}
 _{1} \arrow[r] \arrow[rrr, bend left, dashed] & _{2} \arrow[r] \arrow[dr, dashed] & _3 \arrow[d] \arrow[r] \arrow[dr, dashed] \arrow[dd, bend right, dashed] & _{4} \arrow[d] \arrow[dd, bend left, dashed] \\
& & _5 \arrow[d] \arrow[r] \arrow[dd, bend right, dashed] \arrow[dr, dashed]  & _{6} \arrow[d] \arrow[dd, bend left, dashed]  \\
  & & _{7} \arrow[d] \arrow[r] \arrow[dr, dashed] \arrow[dd, bend right, dashed] & _{8} \arrow[d] \arrow[dd, bend left, dashed] \\
 & & _{9} \arrow[d] \arrow[r] \arrow[dr, dashed] & _{10} \arrow[d]  \\
 & & \vdots & \vdots
\end{tikzcd}
\end{equation*}
\begin{equation*}
\begin{tikzcd}
 _{2} \arrow[r] \arrow[d] \arrow[dd, bend right, dashed] \arrow[dr, dashed] & _{1^*} \arrow[d] \arrow[dd, bend left, dashed] \\
  _3 \arrow[d] \arrow[r] \arrow[dr, dashed] \arrow[dd, bend right, dashed] & _{4} \arrow[d] \arrow[dd, bend left, dashed] \\
 _5 \arrow[d] \arrow[r] \arrow[dd, bend right, dashed] \arrow[dr, dashed]  & _{6} \arrow[d] \arrow[dd, bend left, dashed]  \\
 _{7} \arrow[d] \arrow[r] \arrow[dr, dashed] \arrow[dd, bend right, dashed] & _{8} \arrow[d] \arrow[dd, bend left, dashed] \\
_{9} \arrow[d] \arrow[r] \arrow[dr, dashed] & _{10} \arrow[d]  \\
 \vdots & \vdots
\end{tikzcd}
\end{equation*}
\end{multicols}
Thus, we have an explicit chain of right tilting mutations which allows us to transform the quiver $A_{n,3}$ into the quiver consisting of a column of commutative squares with relations of length $2$ along the outsides. 
This in turn tells us that their respective path algebras are derived equivalent.

\bigskip

\printbibliography

@article{Ladkani2013,
	abstract = {We present a method to construct new tilting complexes from existing ones using tensor products, generalizing a result of Rickard. The endomorphism rings of these complexes are generalized matrix rings that are `componentwise' tensor products, allowing us to obtain many derived equivalences that have not been observed by using previous techniques. Particular examples include algebras generalizing the ADE-chain related to singularity theory, incidence algebras of posets and certain Auslander algebras or more generally endomorphism algebras of initial preprojective modules over path algebras of quivers. Many of these algebras are fractionally Calabi--Yau and we explicitly compute their CY dimensions. Among the quivers of these algebras, one can find shapes of lines, rectangles and triangles.},
	author = {Ladkani, Sefi},
	date-added = {2021-01-24 17:35:09 +0100},
	date-modified = {2021-01-24 17:35:17 +0100},
	doi = {https://doi.org/10.1112/jlms/jds034},
	eprint = {https://londmathsoc.onlinelibrary.wiley.com/doi/pdf/10.1112/jlms/jds034},
	journal = {Journal of the London Mathematical Society},
	number = {1},
	pages = {157-176},
	title = {On derived equivalences of lines, rectangles and triangles},
	url = {https://londmathsoc.onlinelibrary.wiley.com/doi/abs/10.1112/jlms/jds034},
	volume = {87},
	year = {2013},
	Bdsk-Url-1 = {https://londmathsoc.onlinelibrary.wiley.com/doi/abs/10.1112/jlms/jds034},
	Bdsk-Url-2 = {https://doi.org/10.1112/jlms/jds034}}

@article{BMRRT2004,
	abstract = {We introduce a new category C, which we call the cluster category, obtained as a quotient of the bounded derived category D of the module category of a finite-dimensional hereditary algebra H over a field. We show that, in the simply laced Dynkin case, C can be regarded as a natural model for the combinatorics of the corresponding Fomin--Zelevinsky cluster algebra. In this model, the tilting objects correspond to the clusters of Fomin--Zelevinsky. Using approximation theory, we investigate the tilting theory of C, showing that it is more regular than that of the module category itself, and demonstrating an interesting link with the classification of self-injective algebras of finite representation type. This investigation also enables us to conjecture a generalisation of APR-tilting.},
	author = {Aslak Bakke Buan and Robert Marsh and Markus Reineke and Idun Reiten and Gordana Todorov},
	date-added = {2021-01-19 15:01:34 +0100},
	date-modified = {2021-01-19 15:01:46 +0100},
	doi = {https://doi.org/10.1016/j.aim.2005.06.003},
	issn = {0001-8708},
	journal = {Advances in Mathematics},
	keywords = {APR tilting theory, Tilting module, Hom-configuration, Ext-configuration, Complement, Approximation theory, Self-injective algebra, Cluster algebra},
	number = {2},
	pages = {572 - 618},
	title = {Tilting theory and cluster combinatorics},
	url = {http://www.sciencedirect.com/science/article/pii/S0001870805001763},
	volume = {204},
	year = {2006},
	Bdsk-Url-1 = {http://www.sciencedirect.com/science/article/pii/S0001870805001763},
	Bdsk-Url-2 = {https://doi.org/10.1016/j.aim.2005.06.003}}

@article{AI2012,
	abstract = {In representation theory of algebras the notion of `mutation' often plays important roles, and two cases are well known, that is, `cluster tilting mutation' and `exceptional mutation'. In this paper we focus on `tilting mutation', which has a disadvantage that it is often impossible, that is, some of summands of a tilting object cannot be replaced to get a new tilting object. The aim of this paper is to take away this disadvantage by introducing `silting mutation' for silting objects as a generalization of `tilting mutation'. We shall develop a basic theory of silting mutation. In particular, we introduce a partial order on the set of silting objects and establish the relationship with `silting mutation' by generalizing the theory of Riedtmann--Schofield and Happel--Unger. We show that iterated silting mutations act transitively on the set of silting objects for local, hereditary or canonical algebras. Finally, we give a bijection between silting subcategories and certain t-structures.},
	author = {Aihara, Takuma and Iyama, Osamu},
	date-added = {2021-01-19 14:45:42 +0100},
	date-modified = {2021-01-19 14:45:57 +0100},
	doi = {https://doi.org/10.1112/jlms/jdr055},
	eprint = {https://londmathsoc.onlinelibrary.wiley.com/doi/pdf/10.1112/jlms/jdr055},
	journal = {Journal of the London Mathematical Society},
	number = {3},
	pages = {633-668},
	title = {Silting mutation in triangulated categories},
	url = {https://londmathsoc.onlinelibrary.wiley.com/doi/abs/10.1112/jlms/jdr055},
	volume = {85},
	year = {2012},
	Bdsk-Url-1 = {https://londmathsoc.onlinelibrary.wiley.com/doi/abs/10.1112/jlms/jdr055},
	Bdsk-Url-2 = {https://doi.org/10.1112/jlms/jdr055}}

@article{RS1991,
	author = {AH Schofield and C Riedtmann},
	date-added = {2021-01-19 11:55:02 +0100},
	date-modified = {2021-01-19 11:55:30 +0100},
	journal = {Commentarii Mathematici Helvetici},
	language = {English},
	pages = {70 -- 78},
	title = {On a simplicial complex associated with tilting modules},
	volume = {66},
	year = {1991}}

@article{Oppermann2015,
	author = {Oppermann, Steffen},
	date-added = {2021-01-19 10:15:30 +0100},
	date-modified = {2021-01-19 10:15:52 +0100},
	doi = {10.1016/j.aim.2016.11.024},
	journal = {Advances in Mathematics},
	month = {04},
	title = {Quivers for silting mutation},
	volume = {307},
	year = {2015},
	Bdsk-Url-1 = {https://doi.org/10.1016/j.aim.2016.11.024}}

@article{Rickard1989,
	author = {Rickard, Jeremy},
	date-added = {2021-01-19 10:13:06 +0100},
	date-modified = {2021-01-19 10:13:46 +0100},
	doi = {https://doi.org/10.1112/jlms/s2-39.3.436},
	eprint = {https://londmathsoc.onlinelibrary.wiley.com/doi/pdf/10.1112/jlms/s2-39.3.436},
	journal = {Journal of the London Mathematical Society},
	number = {3},
	pages = {436-456},
	title = {Morita Theory for Derived Categories},
	url = {https://londmathsoc.onlinelibrary.wiley.com/doi/abs/10.1112/jlms/s2-39.3.436},
	volume = {s2-39},
	year = {1989},
	Bdsk-Url-1 = {https://londmathsoc.onlinelibrary.wiley.com/doi/abs/10.1112/jlms/s2-39.3.436},
	Bdsk-Url-2 = {https://doi.org/10.1112/jlms/s2-39.3.436}}

\Addresses

\end{document}